\newcommand{\NN}{\mathbb{N}}
\newcommand{\ZZ}{\mathbb{Z}}
\newcommand{\QQ}{\mathbb{Q}}
\newcommand{\RR}{\mathbb{R}}
\newcommand{\CC}{\mathbb{C}}
\newcommand{\eps}{\varepsilon}
\newcommand{\diam}{\text{diam}}
\newcommand{\proj}{\text{proj}}
\newtheorem{theorem}{Theorem}
\newtheorem{lemma}{Lemma}
\newtheorem{prop}{Proposition}
\newtheorem{corollary}{Corollary}
\theoremstyle{definition}
\newtheorem{remark}{Remark}
\title{Generic power series on subsets of the unit disk}
\author{Bal\'azs Maga}
\author{P\'eter Maga}
\address{E\"otv\"os Lor\'and University, Department of Analysis, P\'azm\'any P\'eter s\'et\'any 1/C, Budapest, H–1117 Hungary}
\email{magab@cs.elte.hu}
\address{MTA Alfr\'ed R\'enyi Institute of Mathematics, POB 127, Budapest H-1364, Hungary}\email{magapeter@gmail.com}
\thanks{The first author was supported by the \'UNKP-20-3 New National Excellence Program of the Ministry for Innovation and Technology from the source of the National Research, Development and Innovation Fund, and by the Hungarian National Research, Development and Innovation Office–NKFIH, Grant 124003. The second author was supported by the Premium Postdoctoral Fellowship of the Hungarian Academy of Sciences, the MTA Rényi Intézet Automorphic Research Group, and by the Hungarian National Research, Development and Innovation Office–NKFIH, Grants FK~135218, K~119528.}
\begin{document}

\subjclass[2010]{Primary 30B30; Secondary 28A05, 54H05}
\keywords{complex power series, boundary behaviour, Baire category}
\begin{abstract}
In this paper, we examine the boundary behaviour of the generic power series $f$ with coefficients chosen from a fixed bounded set $\Lambda$ in the sense of Baire category. Notably, we prove that for any open subset $U$ of the unit disk $D$ with a non-real boundary point on the unit circle, $f(U)$ is a dense set of $\CC$. As it is demonstrated, this conclusion does not necessarily hold for arbitrary open sets accumulating to the unit circle. To complement these results, a characterization of coefficient sets having this property is given.
\end{abstract}
\maketitle

\section{Introduction}

Let $\Lambda\subseteq\CC$ be a bounded subset with at least two elements, endowed with its usual subspace topology. Moreover, assume that the product space
\[
\Omega: = \Omega_\Lambda: = \bigtimes_{n=0}^{\infty} \Lambda
\]
is a Baire space. Due to Alexandrov's theorem (e.g. \cite[p.~408]{Kur}), this holds, for example, if $\Lambda$ is $G_\delta$. These general conditions on $\Lambda$ will be assumed throughout the paper. 

For any $\bm{\lambda} = (\lambda_n)_{n=0}^{\infty} \in \Omega$ we can define the power series
\[
f(z): = f_{\bm{\lambda}}(z):=\sum_{n=0}^{\infty}\lambda_n z^n.
\]
The resulting function is clearly holomorphic in the open unit disk $D:=\{|z|< 1\}$. Roughly speaking, we are interested in the generic behaviour of $f$ in terms of Baire category near the boundary $\partial D = S$. The genericity is understood as follows: if a property holds for a set of power series corresponding to a residual set of configurations in $\Omega$, we say it is generic.

This work is a direct continuation of our previous paper \cite{MM}, in which we investigated the typical boundary behaviour of real power series with coefficients chosen from a finite set $\Lambda$. (Actually, the set of coefficients was denoted by $D$ in that paper, we opted to introduce this notational modification as the symbol $D$ is customarily preserved for the unit disk in this setup.) While the probabilistic aspects of the problem was our main focus (considering the uniform distribution over $\Lambda$), we proved straightforward results in terms of Baire category as well (\cite[Theorem~3]{MM}). Notably, if $\Lambda$ has both positive and negative elements, for the generic power series $f$ we have $\limsup_{1-}f=+\infty$ and $\liminf_{1-}f=-\infty$, while if $\Lambda\subseteq[0, +\infty)$ (resp. $\Lambda\subseteq(-\infty, 0]$), we have $\lim_{1-}f=+\infty$ (resp. $\lim_{1-}f=-\infty$). It was natural to consider the same problem in the complex setup as well, which is a more natural habitat of power series. (We note that while those results were stated for finite $\Lambda$ exclusively, the proofs can be generalized in a straightforward manner for any $\Lambda$ for which $\bigtimes_{n=0}^{\infty} \Lambda$ is a Baire space.)

A direct prelude to our results is given in \cite{BS}. Even though the main focus of that paper is the theory of random power series, where it presents spectacular results about natural boundaries without assuming independence, it contains the following
result relevant to our setting:

\begin{prop}[{\cite[Theorem~1.9]{BS}}] \label{prop:BS} For generic $\bm{\lambda}=(\lambda_n)_{n=0}^{\infty}\in\Omega$, the power series $f_{\bm{\lambda}}$ has a natural boundary on $\partial D$. That is it cannot be analytically continued through any of the boundary points of the convergence domain.
\end{prop}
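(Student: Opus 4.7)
The plan is to cover the set of ``bad'' sequences by a countable union of meagre pieces, one for each candidate open disk of analytic continuation. I would enumerate a countable family of open disks $\{B_m = B(\zeta_m, r_m)\}_{m \geq 1}$ with $\zeta_m \in \partial D$ of rational argument and $r_m > 0$ rational. Since $f_{\bm{\lambda}}$ fails to have $\partial D$ as natural boundary if and only if it extends analytically to $D \cup B_m$ for some $m$, it suffices to show each
\[
A_m := \{\bm{\lambda} \in \Omega : f_{\bm{\lambda}} \text{ extends analytically to } D \cup B_m\}
\]
is meagre.

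Fix one such $B = B(\zeta_0, r)$ and pick a radial interior point $w_0 = (1-\rho)\zeta_0 \in D \cap B$ with $\rho < r$, so that $R := \dist(w_0, \CC \setminus (D \cup B)) > \rho$. If $\bm{\lambda} \in A_B$, the Taylor expansion $\sum_n a_n(w_0)(z - w_0)^n$ converges on $B(w_0, R)$, forcing $\limsup_n |a_n(w_0)|^{1/n} \leq 1/R < 1/\rho$, where $a_n(w_0) = \sum_{k \geq n}\binom{k}{n}w_0^{k-n}\lambda_k$. Fixing a rational $c \in (1/R, 1/\rho)$ gives $A_B \subseteq \bigcup_{N \geq 1} C_N$, where
\[
C_N := \{\bm{\lambda} : |a_n(w_0)| \leq c^n \text{ for all } n \geq N\}.
\]
Boundedness of $\Lambda$ makes the series defining $a_n(w_0)$ converge uniformly in $\bm{\lambda}$, so $a_n(w_0)$ is continuous on $\Omega$ and each $C_N$ is closed.

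The core step is to show each $C_N$ is nowhere dense. Given a basic open $U \subseteq \Omega$ constraining only $\lambda_0, \dots, \lambda_K$, pick distinct $\alpha, \beta \in \Lambda$, set $\gamma := \beta - \alpha$, and take $n > \max(K, N)$ large. Starting from any $\bm{\lambda}^{(0)} \in U$ and resetting its coordinates $\lambda_k$ for $k \geq n$ to lie in $\{\alpha, \beta\}$ keeps the sequence in $U$; writing $\lambda_k = \alpha + \epsilon_k \gamma$ with $\epsilon_k \in \{0,1\}$ and $c_k := \binom{k}{n}w_0^{k-n}$,
\[
a_n(w_0) = \frac{\alpha}{(1-w_0)^{n+1}} + \gamma \sum_{k \geq n}\epsilon_k c_k.
\]
For each $\tilde\phi \in [0,2\pi)$, choosing $\epsilon_k = 1$ when $\Re(e^{i\tilde\phi}c_k) \geq 0$ and $\epsilon_k = 0$ otherwise makes $\Re\bigl(e^{i\tilde\phi}\sum_k \epsilon_k c_k\bigr) = \sum_k \Re(e^{i\tilde\phi}c_k)^+$; averaging this in $\tilde\phi$ and using $\tfrac{1}{2\pi}\int_0^{2\pi}(\cos\psi)^+ d\psi = \tfrac{1}{\pi}$ gives
\[
\sup_{\tilde\phi} \sum_k \Re(e^{i\tilde\phi}c_k)^+ \geq \frac{1}{\pi}\sum_k |c_k| = \frac{1}{\pi\,\rho^{n+1}},
\]
so some admissible $(\epsilon_k)$ produces $|a_n(w_0)| \geq |\gamma|/(\pi\rho^{n+1}) - |\alpha|/|1-w_0|^{n+1}$. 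When $\zeta_0 \neq 1$ one has $|1-w_0| > \rho$, so the correction term is exponentially smaller and $|a_n(w_0)| \geq |\gamma|/(2\pi\rho^{n+1})$ for $n$ large; when $\zeta_0 = 1$ I bypass averaging and use instead the two constant tails $\lambda_k \equiv \alpha$ and $\lambda_k \equiv \beta$ on $k \geq n$, whose $a_n(w_0)$-values differ by $|\gamma|/\rho^{n+1}$ and thus one of them has modulus $\geq |\gamma|/(2\rho^{n+1})$. Either way $|a_n(w_0)|^{1/n} \to 1/\rho > c$ along the perturbed sequence, so $|a_n(w_0)| > c^n$ for some $n \geq N$, exhibiting a point of $U \setminus C_N$.

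The essential difficulty is this last perturbation estimate: a naive two-coordinate swap only yields $|a_n(w_0)| \gtrsim |1-w_0|^{-n}$, which is strictly weaker than the critical rate $\rho^{-n}$ whenever $\zeta_0 \neq 1$, so one cannot hope to exclude extension through a generic boundary point this way. The phase-adapted choice of $\epsilon_k$ combined with the averaging identity is what recovers the correct exponent uniformly across $\partial D$. Once this is in hand, each $A_m$ is an $F_\sigma$-union of closed nowhere dense sets, hence meagre, and so is $\bigcup_m A_m$, leaving a residual set of $\bm{\lambda}$ for which $\partial D$ is a natural boundary.
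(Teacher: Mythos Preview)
The paper does not contain a proof of this proposition: it is quoted verbatim as \cite[Theorem~1.9]{BS} and used only as background. There is therefore no ``paper's own proof'' to compare against.

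That said, your argument is a correct and self-contained proof in the present setting. The reduction to countably many target disks $B_m$ and the decomposition $A_B\subseteq\bigcup_N C_N$ via the Cauchy--Hadamard bound at the recentred point $w_0$ are standard and sound; the continuity of $\bm{\lambda}\mapsto a_n(w_0)$ and closedness of each $C_N$ follow from boundedness of $\Lambda$ exactly as you say. The only delicate step is the nowhere-density, and your phase-averaging trick handles it cleanly: averaging $\sum_k(\Re(e^{i\tilde\phi}c_k))^+$ over $\tilde\phi$ gives $\tfrac{1}{\pi}\sum_k|c_k|=\tfrac{1}{\pi}\rho^{-(n+1)}$, and since the supremum dominates the average you obtain an admissible $(\epsilon_k)$ with $|\sum_k\epsilon_k c_k|\geq\tfrac{1}{\pi}\rho^{-(n+1)}$. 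The correction term $|\alpha|\,|1-w_0|^{-(n+1)}$ is indeed exponentially smaller when $\zeta_0\neq 1$ because $|1-(1-\rho)\zeta_0|^2=1-2(1-\rho)\cos\theta+(1-\rho)^2>\rho^2$ for $\theta\neq 0$, and your separate treatment of $\zeta_0=1$ via the two constant tails is correct. One small remark: you only need that some $\tilde\phi$ gives $S(\tilde\phi)$ at least the average, which is immediate; there is no need for the supremum to be attained, though it is, since $\sum_k|c_k|<\infty$ makes $S$ continuous on $[0,2\pi]$.
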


Another predecessor of this line of research is \cite{Kah}, in which results are stated and proved about generic complex power series. For a more detailed historical summary of the topic we also refer to that paper. We recall that while the problem of the ``general behaviour'' dates back to Borel, the more thorougly examined probabilistic question was worked out by several authors, as presented in \cite{Kah2}. In terms of Baire category, the first results were provided in \cite{KS}. The setup of \cite{KS} slightly differs from ours, as the topological vector space $H(D)$ in which Baire category is investigated is the space of all functions which are holomorphic in $D$ with the topology of locally uniform convergence. (The $\Omega$ we consider corresponds to a subspace of it.)  Their main results stated that generically, $S$ is a natural boundary and $f(D)=\CC$. These results were generalized by \cite{Kah}, in particular:

\begin{prop}[{\cite[Proposition~3.2]{Kah}}] \label{prop:Kah} Assume that $U \subseteq D$ is open and $\overline U \cap S \neq \emptyset$. Then for generic $f\in H(D)$ we have $f(U)=\CC$ generically. (For a set $B\subseteq \CC$, its closure is denoted by $\overline{B}$ throughout the paper.)
\end{prop}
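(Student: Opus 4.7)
The approach is to apply Baire category in the Fr\'echet space $H(D)$ (complete under the topology of locally uniform convergence, hence Baire), exhibiting $\{f\in H(D):f(U)=\CC\}$ as containing a residual set. I would fix enumerations $\{V_j\}$ of the open balls in $\CC$ with rational center and rational positive radius, and $\{\Delta_k\}$ of the open disks $B(z,s)$ with $z\in\QQ+i\QQ$, $s\in\QQ_{>0}$ and $\overline{B(z,s)}\subset U$, and define, for each pair,
\[
C_{j,k}:=\{f\in H(D):\ V_j\subset f(\Delta_k)\ \text{and}\ f(\partial\Delta_k)\cap V_j=\emptyset\}.
\]
The plan is to show each $C_{j,k}$ is open and that $\bigcup_k C_{j,k}$ is dense for every $j$; then $G:=\bigcap_j\bigcup_k C_{j,k}$ is residual in $H(D)$, and every $f\in G$ satisfies $f(U)\supseteq \bigcup_j V_j=\CC$.

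For \emph{openness} of $C_{j,k}$, I would use Rouch\'e's theorem. If $f\in C_{j,k}$, compactness gives $\inf_{z\in\partial\Delta_k,\ w\in V_j}|f(z)-w|>0$, and for $g$ uniformly close to $f$ on $\partial\Delta_k$ the same infimum stays positive. Applying Rouch\'e to $(f-w,g-w)$ on $\partial\Delta_k$ would then show that for every $w\in V_j$ the function $g-w$ has the same number of zeros in $\Delta_k$ as $f-w$, hence at least one, so $g\in C_{j,k}$.

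The main work is in \emph{density} of $\bigcup_k C_{j,k}$. Given $f_0\in H(D)$, compact $L\subset D$, and $\delta>0$, the hypothesis $\overline U\cap S\neq\emptyset$ lets me pick some $\Delta_k\subset U$ with $\overline{\Delta_k}\cap L=\emptyset$, since $U$ has points arbitrarily close to $\partial D$ while $L$ is uniformly away from it. I would enlarge $L$ to a closed disk $L'\subset D$ still disjoint from $\overline{\Delta_k}$, so that the complement of $K:=L'\cup\overline{\Delta_k}$ is connected. Then define a continuous $F$ on $K$ by $F=f_0$ on $L'$ and $F=h$ on $\overline{\Delta_k}$, where $h$ is an affine bijection from $\overline{\Delta_k}$ onto $\overline{B(w_j,r_j+\eta)}\supsetneq \overline{V_j}$ for a small $\eta>0$ (with $V_j=B(w_j,r_j)$). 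By Mergelyan's theorem there is a polynomial $p$ with $\sup_K|p-F|<\eps$; for $\eps<\min(\delta,\eta/2)$, Rouch\'e on $\partial\Delta_k$ gives $V_j\subset p(\Delta_k)$ and $p(\partial\Delta_k)\cap V_j=\emptyset$, while $\sup_L|p-f_0|<\delta$. Hence $p\in C_{j,k}$ and belongs to the prescribed neighborhood of $f_0$.

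\emph{The hard part} will be this density step. It relies essentially on Mergelyan's theorem, which produces a single polynomial that is close to $f_0$ on $L$ while mimicking a prescribed surjective affine map on a disjoint disk inside $U$, and on the hypothesis $\overline U\cap S\neq\emptyset$, which is precisely what guarantees that disks $\Delta_k\subset U$ disjoint from an arbitrary compact $L\subset D$ always exist.
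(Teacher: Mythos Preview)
The paper does not prove this proposition: it is quoted from \cite{Kah} without argument, and the only information given is the remark that ``the proof of Proposition~\ref{prop:Kah} relies on Runge's theorem on polynomial approximation.'' Your plan is precisely along those lines---Baire category in $H(D)$, density via polynomial approximation on a two-component compact set, openness via Rouch\'e---so the approach matches what the paper attributes to Kahane. Two small remarks:

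\emph{(i) Openness.} Your claimed inequality $\inf_{z\in\partial\Delta_k,\ w\in V_j}|f(z)-w|>0$ is not true as written: $V_j$ is open, and $f(\partial\Delta_k)$ may touch $\partial V_j$ while still satisfying $f(\partial\Delta_k)\cap V_j=\emptyset$, making the infimum zero. The fix is trivial: replace the boundary-avoidance condition by $f(\partial\Delta_k)\cap\overline{V_j}=\emptyset$. Compactness of $f(\partial\Delta_k)$ and $\overline{V_j}$ then gives a positive distance, and your Rouch\'e argument goes through verbatim. Your density step already produces $p$ with $p(\partial\Delta_k)$ at distance at least $\eta-\eps>0$ from $\overline{V_j}$, so nothing else changes.

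\emph{(ii) Runge versus Mergelyan.} You invoke Mergelyan, but you do not need it: your $F$ is holomorphic on an open neighborhood of $K$ (namely the disjoint union of a neighborhood of $L'$ inside $D$ and a neighborhood of $\overline{\Delta_k}$), so the classical Runge theorem already furnishes the approximating polynomial. This is presumably why the paper speaks of Runge rather than Mergelyan. Relatedly, when you ``enlarge $L$ to a closed disk $L'$ disjoint from $\overline{\Delta_k}$'', you should pick $\Delta_k$ from the start to lie outside some $\overline{D_r}\supset L$ (possible since $U$ accumulates on $S$); otherwise a generic compact $L$ need not fit inside any disk disjoint from a previously chosen $\overline{\Delta_k}$.
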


Our main goal is strengthening Proposition \ref{prop:BS} in a similar manner. The proof of Proposition \ref{prop:Kah} relies on Runge's theorem on polynomial approximation, which is out of reach in our setup that poses restrictions on the permissible holomorphic functions. Consequently, when one would like to verify similar results for $\Omega$, different techniques are required. As we will see, this leads to somewhat weaker theorems: roughly, we could only verify that $f(U)$ is an open, dense set instead of being equal to $\CC$. Our first main result is the following.

\begin{theorem} \label{thm:not_-1_1_arguments}
Assume that $U \subseteq D$ is open with an accumulation point $\zeta\in S$ with non-vanishing imaginary part. Then for generic $\bm{\lambda}=(\lambda_n)_{n=0}^{\infty}\in\Omega$, the image $f(U)\subseteq \CC$ is dense and open.
\end{theorem}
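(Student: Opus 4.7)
The plan is to handle openness and density separately. \emph{Openness} is immediate: outside the nowhere-dense set $\{\bm{\lambda} : \lambda_n = 0 \text{ for all } n \geq 1\}$, the function $f_{\bm{\lambda}}$ is non-constant on the connected domain $D$, so by the open mapping theorem $f(U)$ is open.

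For \emph{density} I proceed by Baire category. Fix a countable dense set $\{w_m\}_{m\geq 1} \subseteq \CC$ and positive rationals $\{\varepsilon_k\}_{k\geq 1}$, and let
\[
A_{m,k} := \{\bm{\lambda} \in \Omega : \exists z \in U \text{ with } |f_{\bm{\lambda}}(z) - w_m| < \varepsilon_k\}.
\]
Each $A_{m,k}$ is open by continuity of $\bm{\lambda} \mapsto f_{\bm{\lambda}}(z)$ (the series converges uniformly in $\bm{\lambda}$ on compacta of $D$ because $\Lambda$ is bounded), so if every $A_{m,k}$ is dense in $\Omega$ then their intersection is residual and yields the density of $f(U)$ for every $\bm{\lambda}$ in it.

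The core step is therefore the density of each $A_{m,k}$: given a basic open set in $\Omega$ fixing $\lambda_0, \ldots, \lambda_{N_0}$, I need to extend with $\lambda_{N_0+1}, \lambda_{N_0+2}, \ldots \in \Lambda$ and select $z \in U$ with $|f(z) - w_m| < \varepsilon_k$. Fix two distinct elements $\alpha, \beta \in \Lambda$, write $P(z) = \sum_{n \leq N_0} \lambda_n z^n$, and parameterize $\lambda_n = \alpha + \eta_n(\beta - \alpha)$ for $n > N_0$ with $\eta_n \in \{0,1\}$. The task reduces to finding $z_0 \in U$ near $\zeta$ and a $\{0,1\}$-sequence $(\eta_n)$ such that $\sum_{n > N_0} \eta_n z_0^n$ is within $\varepsilon_k/|\beta-\alpha|$ of the bounded target
\[
T(z_0) := \frac{w_m - P(z_0) - \alpha z_0^{N_0+1}/(1-z_0)}{\beta - \alpha},
\]
which is continuous at $\zeta$ since $\zeta \neq 1$ (ensured by $\Im\zeta \neq 0$). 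I will take $z_0 \in U$ close to $\zeta$ with $|z_0|$ very close to $1$ and study $S := \sum_{n > N_0} \eta_n z_0^n$ under $(\eta_n)$ i.i.d.\ $\mathrm{Bernoulli}(1/2)$. A short calculation shows that the covariance matrix of $(\mathrm{Re}\, S, \mathrm{Im}\, S)$ has both diagonal entries of order $1/(1-|z_0|^2)\to\infty$, while its off-diagonal entry reduces, up to absolute constants, to $\mathrm{Im}\bigl((|z_0|^2 e^{2i\arg z_0})^{N_0+1}/(1 - |z_0|^2 e^{2i\arg z_0})\bigr)$ and stays bounded as $|z_0|\to 1$ precisely because $2\arg z_0 \not\equiv 0 \pmod{2\pi}$; this is where the hypothesis $\Im\zeta \neq 0$ enters decisively. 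A 2D local anti-concentration estimate then yields a deterministic $(\eta_n)$ achieving the required approximation.

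\emph{The main obstacle} is exactly this 2D non-degenerate anti-concentration step. A Berry--Esseen bound alone does not suffice, since the Gaussian mass of the target ball $B(T(z_0)/(\beta-\alpha), \varepsilon_k/|\beta-\alpha|)$ is of order $\varepsilon_k^2 (1-|z_0|^2)$, on the same order as typical CLT errors; one needs a sharper local-CLT-type estimate, or alternatively an IFS/geometric argument showing that the Bernoulli attractor $\{\sum_{n > N_0} \eta_n z_0^n : (\eta_n) \in \{0,1\}^{\NN}\}$ covers a growing disk as $|z_0|\to 1$ when $z_0 \notin \RR$. The hypothesis $\Im\zeta\neq 0$ is exactly what rules out the degenerate rank-one case $\zeta=\pm 1$, in which targets off a distinguished line in $\CC$ would be unreachable by this scheme --- consistent with the real-variable behavior recalled from \cite{MM} in the introduction.
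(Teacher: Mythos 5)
Your setup matches the paper's: the Baire-category scaffold, the open-and-dense sets $A_{m,k}$, the openness verification, and the reduction to $\{0,1\}$-coefficients via an affine change of variables (cf.\ Lemma~\ref{lemma:translate_Lambda}~\ref{lemma:translate_mult}--\ref{lemma:translate_add_shrink} in the paper) are all sound. Your observation that the shift term $\alpha z_0^{N_0+1}/(1-z_0)$ stays bounded near $\zeta\neq 1$, and that $\Im\zeta\neq 0$ is what makes the problem ``two-dimensional'' rather than collapsing onto a line, is exactly right.

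The gap is the core approximation step, and you say so yourself. You need: for $\tau$ close to $\zeta$ with $|\tau|$ close to $1$, the set of finite sums $\bigl\{\sum_{n>N_0}\eta_n\tau^n:\eta_n\in\{0,1\}\bigr\}$ contains a point within $\eps$ of an arbitrary bounded target. Your probabilistic route (covariance of $(\Re S,\Im S)$ blows up like $1/(1-|\tau|^2)$ with bounded off-diagonal term since $\tau^2\not\to 1$) correctly identifies non-degeneracy, but a CLT gives only weak convergence of the rescaled law, and the ball you must hit has Gaussian mass $\sim\eps^2(1-|\tau|^2)$, below the resolution of Berry--Esseen; a genuine 2D local CLT for the Bernoulli convolution $\sum\eta_n\tau^n$ would be needed, and absolute continuity/boundedness of the density of such complex Bernoulli convolutions is itself a delicate matter that can fail for specific $\tau$ (the classical Pisot phenomenon). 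So this is not a proof, and you are right to flag it as the main obstacle.

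The paper avoids probability entirely and works at $\zeta$ itself rather than as $|\tau|\to 1$. It shows (Lemmas~\ref{lemma:circle_group_ae_Z_density} and \ref{lemma:circle_group_Z_approx_density_comp}) that for $\zeta\in S$ with $\zeta\neq\pm 1$ the set $\{0,1\}_N[\zeta]$ of finite $\{0,1\}$-sums of powers $\zeta^N,\zeta^{N+1},\ldots$ is dense in $\CC$ unless $\zeta\in\{\pm i,\pm\omega,\pm\omega^2\}$, in which case it is still a $1$-net (the Gaussian or Eisenstein lattice). One then picks the finitely many coefficients at $\zeta$ and perturbs $\zeta$ to $\tau\in U$ by continuity; in the lattice case an extra multiplication by $\tau^M$ with $|\tau^M|\asymp\eps$ rescales the $1$-net into an $\eps$-net. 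This is a short, deterministic, algebraic argument. Your second suggested alternative --- a geometric ``Bernoulli attractor covers a growing disk'' argument --- is in spirit much closer to what the paper actually does, but you did not carry it out either.

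In short: correct framework, correct reduction, correct identification of where $\Im\zeta\neq 0$ matters, but the decisive approximation lemma is left unproven, and the probabilistic substitute proposed is not a working proof.
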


As we will see below, the conclusion of Theorem \ref{thm:not_-1_1_arguments} does not hold for all open sets accumulating to $S$. In order to allow 1 or $-1$ to be the accumulation point on the boundary, certain conditions on $\Lambda$ should be introduced. The necessary and sufficient conditions are summarised by our other theorems. By a real line, we mean a one-dimensional real affine subspace of $\mathbb{C}$, while by a real half-plane we mean one of the two components of the complement of a real line. We refer to its closure as a closed real half-plane.

\begin{theorem} \label{thm:not_1_arguments}
If $\Lambda$ is not contained by a real line, and $U \subseteq D$ is open with $-1 \in \overline{U}$, then for generic $\bm{\lambda}=(\lambda_n)_{n=0}^{\infty}\in\Omega$, the image $f_{\bm{\lambda}}(U)\subseteq \CC$ is dense and open. If $\Lambda$ is contained by a real line, there exists an open $U \subseteq D$ with $-1 \in \overline{U}$ for which $f_{\bm{\lambda}}(U)$ evades a closed real half-plane, regardless of the choice of $\bm{\lambda}$.
\end{theorem}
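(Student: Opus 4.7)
The plan is to dispatch the two halves of the theorem separately, handling first the easier negative direction, then the harder positive one. For the negative direction (existence of a bad $U$ when $\Lambda$ lies on a real line), I would construct a cusp at $-1$ narrow enough to trap the image under any $f_{\bm\lambda}$ inside a strip. For the positive direction, I would apply the standard Baire-category machinery: for each open $V\subseteq\CC$ from a countable base, show that $A_V := \{\bm\lambda : f_{\bm\lambda}(U)\cap V\ne\emptyset\}$ is open and dense in $\Omega$. Openness of $f_{\bm\lambda}(U)$ itself is generic by the open mapping theorem, since the set of $\bm\lambda$ making $f_{\bm\lambda}$ constant (those with $\lambda_n=0$ for all $n\ge 1$) is closed with empty interior, hence nowhere dense.

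For the negative direction, assume $\Lambda\subseteq a + b\RR$ with $b\ne 0$, and write $\lambda_n = a + bt_n$ with $|t_n|\le M$. Take
\[
U := \{-\rho e^{i\theta} : \tfrac12 < \rho < 1,\ |\theta| < (1-\rho)^2\},
\]
which is open in $D$ with $-1\in\overline U$. For $z = -\rho e^{i\theta}\in U$ and $g(z) := \sum_n t_n z^n$, the elementary bound $|\sin n\theta|\le n|\theta|$ yields
\[
|\mathrm{Im}\, g(z)| \le M(1-\rho)^2\sum_n n\rho^n = M(1-\rho)^2\cdot\frac{\rho}{(1-\rho)^2} \le M.
\]
Since $f_{\bm\lambda}(z) = a/(1-z) + bg(z)$ and $a/(1-z)$ is uniformly bounded on $U$ (because $|1-z|\ge 1$ there), $f_{\bm\lambda}(U)$ lies in a strip parallel to $b\RR$ of bounded width, uniformly in $\bm\lambda$, and therefore evades a closed real half-plane.

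For the positive direction, openness of $A_V$ follows from the continuity of the evaluation $\bm\lambda\mapsto f_{\bm\lambda}(z)$ (a consequence of the uniform tail bound $\le 2M|z|^N/(1-|z|)$) together with $A_V = \bigcup_{z\in U}\{\bm\lambda : f_{\bm\lambda}(z)\in V\}$. For density, fix $\bm\lambda^0\in\Omega$ and a basic open neighborhood $W$ constraining coordinates $n<N$; fix also $w_0\in V$ with $B(w_0,\eps)\subseteq V$. Since $\Lambda$ is not on a real line, choose $\alpha_0,\alpha_1,\alpha_2\in\Lambda$ with $\beta_j := \alpha_j - \alpha_0$ ($j=1,2$) $\RR$-linearly independent. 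Pick $z_0\in U$ very close to $-1$ with $\mathrm{Im}\, z_0\ne 0$ (possible since $U$ is open with $-1\in\overline U$, so $U\cap B(-1,\eps')$ is a nonempty open subset of $D$ and cannot be confined to $\RR$). For $n\ge N$, restrict $\lambda_n\in\{\alpha_0,\alpha_1,\alpha_2\}$; writing $\lambda_n = \alpha_0 + \delta_n$ with $\delta_n\in\{0,\beta_1,\beta_2\}$ yields
\[
f_{\bm\lambda}(z_0) = Q(z_0) + \beta_1 T_1(z_0) + \beta_2 T_2(z_0),
\]
where $Q(z_0)$ is prescribed by $\bm\lambda^0$ and $T_j(z_0) = \sum_{n\in S_j} z_0^n$ for disjoint $S_1, S_2\subseteq[N,\infty)$ to be chosen. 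The task reduces to approximating $T := w_0 - Q(z_0)$ by $\beta_1 T_1(z_0) + \beta_2 T_2(z_0)$ to within $\eps$.

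The principal obstacle is proving this approximation claim. With $z_0 = -re^{i\theta}$ ($r$ close to $1$, $\theta\ne 0$), the vectors $z_0^n = (-r)^n e^{in\theta}$ have magnitudes $r^n$ decaying slowly (with total $\sum_n r^n = (1-r)^{-1}\to\infty$) and arguments $n(\pi+\theta)$ winding, since $\theta\ne 0$. The $\RR$-linear independence of $\beta_1,\beta_2$ implies that the family $\{\beta_j z_0^n : j\in\{1,2\},\ n\ge N\}$ spans every direction in $\CC$. A greedy construction — partitioning $[N,\infty)$ into two interlaced subsequences for $S_1$ and $S_2$, then successively adjoining the index-species pair $(n,j)$ that most reduces the residual distance from the current partial sum to $T$ — should converge within any prescribed $\eps$ once $r$ is taken close enough to $1$. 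Rigorously executing this quantitative approximation, while keeping $z_0$ consistent with the shape of $U$ and respecting the disjointness of $S_1,S_2$, constitutes the main technical burden of the proof.
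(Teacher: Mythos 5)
The negative half of your proof is correct and is slightly more direct than the paper's: you estimate $\Im\,g$ on the cusp $U=\{-\rho e^{i\theta}:\tfrac12<\rho<1,\ |\theta|<(1-\rho)^2\}$ with the bound $|\sin n\theta|\le n|\theta|$ and $\sum n\rho^n=\rho/(1-\rho)^2$, while the paper first normalizes $\Lambda$ via Lemma~\ref{lemma:translate_Lambda} and then builds $U$ as a union of sectors $U_k$ with shrinking apertures. Both give the same strip bound, so this part is fine.

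The positive half has a genuine gap, and you flag it yourself: the ``greedy construction'' that is supposed to approximate $T=w_0-Q(z_0)$ by $\beta_1T_1(z_0)+\beta_2T_2(z_0)$ is only sketched and is never verified. Worse, the heuristic you give for why it should work --- that the arguments $n(\pi+\theta)$ of $z_0^n$ ``wind'' because $\theta\ne0$ --- is not the operative mechanism and would not survive a quantitative treatment. For a cuspy $U$ such as the one you use in the negative direction, any $z_0\in U$ near $-1$ has $|\theta|\ll 1-|z_0|$, so the winding rate is negligible on the scale $n\lesssim 1/(1-|z_0|)$ where $|z_0|^n$ is still appreciable; the arguments essentially do not move. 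The feature that actually makes the approximation possible is the \emph{alternation} $(-1)^n$, which supplies both signs of $\beta_1$ and $\beta_2$, combined with the $\RR$-linear independence of $\beta_1,\beta_2$; the imaginary part of $z_0$ is irrelevant and need not be nonzero. The paper exploits exactly this alternation (with the three elements $0,1,\lambda$ playing the role of your $\alpha_0,\alpha_1,\alpha_2$) and avoids a running greedy altogether: it fixes a bounded window of exponents $(N,M_R)$, represents each point of a truncated lattice $\{a+b\lambda:|a|,|b|<m_R\}$ exactly at $\zeta=-1$ as $\sum(-1)^{n_j}+\lambda\sum(-1)^{n'_{j'}}$, passes to $\tau$ near $-1$ by continuity of a finite family of functions, and handles the required scaling by an extra factor $\tau^M$ with $|\tau|^M=\eps_0$ (a rotation of a disk being again that disk). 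You would need to supply an argument of comparable precision --- in particular, controlling the size of the residual relative to the remaining budget $\sum_{n\ge n_0}|z_0|^n$ and showing that each greedy step makes progress bounded below by a fixed fraction of $|z_0|^n$ --- before this half of the proof is complete.
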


\begin{theorem} \label{thm:all_arguments}
If $\Lambda$ is not contained by a closed real half-plane of the form $\{z: \text{ } \alpha \leq \arg z \leq \alpha + \pi\}$, and $U \subseteq D$ is open with $1 \in \overline{U}$, then for generic $\bm{\lambda}=(\lambda_n)_{n=0}^{\infty}\in\Omega$, the image $f_{\bm{\lambda}}(U)\subseteq \CC$ is dense and open. If $\Lambda$ is contained by a closed real half-plane of the form $\{z: \text{ } \alpha \leq \arg z \leq \alpha + \pi\}$, then there exists an open $U \subseteq D$ with $1 \in \overline{U}$ for which $f_{\bm{\lambda}}(U)$ evades a closed real half-plane, regardless of the choice of $\bm{\lambda}$.
\end{theorem}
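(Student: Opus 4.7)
My plan is to treat the two implications separately; the sufficiency direction is substantially the harder one. For the \textbf{necessity}, assume $\Lambda \subseteq H := \{z : \alpha \leq \arg z \leq \alpha + \pi\}$. I would take the quadratic cusp $U := \{x + iy \in D : 1/2 < x < 1,\ |y| < c(1-x)^2\}$ for a sufficiently small $c > 0$, which is open and contains the real interval $(1/2, 1)$, so $1 \in \overline U$. A direct estimate using $|z^n - x^n| \leq n|y|\,|z|^{n-1}$ together with $1 - |z| \geq (1-x)/2$ (valid once $c$ is small) gives $|f_{\bm{\lambda}}(z) - f_{\bm{\lambda}}(x)| \leq 4cM^*$ uniformly in $\bm{\lambda}$, where $M^* := \sup_{\lambda \in \Lambda}|\lambda|$. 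Since $H$ is a convex cone through the origin and each $\lambda_n x^n \in H$, we have $f_{\bm{\lambda}}(x) \in H$; hence $f_{\bm{\lambda}}(U) \subseteq H + B(0, 4cM^*)$ avoids the closed real half-plane $\{w : \operatorname{Im}(e^{-i\alpha}w) \leq -4cM^*\}$, regardless of $\bm{\lambda}$.

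For \textbf{sufficiency}, assume $0 \in \operatorname{int}(\operatorname{conv}\Lambda)$ and fix $\mu_1, \ldots, \mu_k \in \Lambda$ and $\rho > 0$ with $B(0, \rho) \subseteq K := \operatorname{conv}(\mu_1, \ldots, \mu_k)$. Openness of $f_{\bm{\lambda}}(U)$ is immediate for generic $\bm{\lambda}$ by Proposition~\ref{prop:BS} and the open mapping theorem. For density, it suffices to show that each set $A_{w, \epsilon} := \{\bm{\lambda} \in \Omega : \exists z \in U,\ |f_{\bm{\lambda}}(z) - w| < \epsilon\}$, with $(w, \epsilon)$ ranging over a countable dense subset of $\CC \times (0, \infty)$, is open and dense. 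Openness is routine, splitting the series at a large index and using boundedness of $\Lambda$ to dominate the tail.

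The heart of the argument is density of $A_{w, \epsilon}$. Given $\bm{\lambda}_0$ and a basic open neighborhood $V$ fixing the first $N+1$ coordinates, I would construct $\bm{\lambda} \in V$ and $z \in U$ with $f_{\bm{\lambda}}(z) = w$ by a block-correction scheme. Pick $z \in U$ with $|1-z|$ sufficiently small (specified below), set $\lambda_n = (\lambda_0)_n$ for $n \leq N$, and let $T := w - \sum_{n \leq N}\lambda_n z^n$. Partition $\{N+1, N+2, \ldots\}$ into consecutive blocks $B_j$ of equal length $L := \lfloor 1/(2(1-|z|)) \rfloor$ and restrict $\lambda_n$ to $\{\mu_1, \ldots, \mu_k\}$ throughout. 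Since $z^n K \supseteq B(0, \rho|z|^n)$, the Minkowski sum $\sum_{n \in B_j} z^n K$ contains $B(0, \rho \sum_{n \in B_j}|z|^n)$, whose radius is comparable to $\rho|z|^{n_j}/(1-|z|)$ with $n_j := \min B_j$; by the Shapley--Folkman theorem, the achievable discrete block-sum $g_j(z) := \sum_{n \in B_j}\lambda_n z^n$ lies within Hausdorff distance $\leq 4M^*|z|^{n_j}$ of this convex hull. Inductively choose $g_j(z)$ so that $|T - \sum_{i \leq j}g_i(z)| \leq 4M^*|z|^{n_j}$; since $|z|^L \approx e^{-1/2}$, the residuals decay geometrically, hence $\sum_{j \geq 1}g_j(z) = T$ and $f_{\bm{\lambda}}(z) = w$ exactly.

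The main obstacle is the quantitative control inside the block scheme: one needs $|z|$ close enough to $1$ that the convex-hull disk of each $B_j$ strictly dominates the Shapley--Folkman error inherited from $B_{j-1}$; equivalently, $\rho/(1-|z|)$ must exceed an absolute constant depending only on $M^*$ and $\rho$. This is always achievable because $1 \in \overline U$ lets us draw $z$ from $U$ as close to $1$ as we wish, and once the chain is initiated the geometric ratio $|z|^L$ carries the errors to zero automatically.
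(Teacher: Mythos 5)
Your proof is correct, and it takes a genuinely different route from the paper's in both directions. For the \textbf{necessity} part, the paper constructs $U$ as a union of truncated sectors $U_k$ whose angular aperture $\alpha_k$ shrinks as one approaches $1$, and then bounds $\Im f_{\bm\lambda}$ term-by-term using $\arcsin$ estimates (after normalising $\Lambda$ to be real via Lemma~\ref{lemma:translate_Lambda}); you instead take a single quadratic cusp $\{1/2 < x < 1,\ |y| < c(1-x)^2\}$ and derive the uniform bound $|f_{\bm\lambda}(z) - f_{\bm\lambda}(x)| \leq 4cM^*$ from the mean-value estimate $\sum n|z|^{n-1}|y| = |y|/(1-|z|)^2$, then exploit that $f_{\bm\lambda}(x) \in H$ because the closed half-plane through $0$ is a convex cone. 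Your version is shorter, avoids the reduction to real $\Lambda$, and gives the half-plane directly. For the \textbf{sufficiency} part, the paper works with a single block of $m$ indices, uses its Lemma~\ref{lemma:get_around} to find $\Delta_0,\dots,\Delta_3$ with $0$ in the interior of their convex hull $P$, shows $\sum_{i=1}^m V$ is a net of $mP\supseteq D_{mr}$ by an induction on Minkowski sums, and then controls all remaining coefficients via the greedy boundedness Lemma~\ref{lemma:small_absolute_value} (choosing $R^*$ and then $m$ so that $|w|+R^* < \varepsilon_0 m r$). You instead use the equivalent characterisation $0\in\operatorname{int}(\operatorname{conv}\Lambda)$, invoke Shapley--Folkman to bound the Hausdorff distance between the discrete block sum $\sum_{n\in B_j} z^n\{\mu_1,\dots,\mu_k\}$ and its convex hull by $4M^*|z|^{n_j}$, and run an infinite block-correction scheme with geometrically decaying residuals, giving exact equality $f_{\bm\lambda}(z)=w$ rather than $\varepsilon$-approximation. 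Both approaches rely on the same underlying geometry (iterated Minkowski sums of a set whose convex hull contains a disk around $0$), but Shapley--Folkman packages the net estimate cleanly and replaces the paper's ad hoc Lemmata~\ref{lemma:reduce_absolute_value} and \ref{lemma:small_absolute_value}; the trade-off is that your argument imports a nontrivial black-box from convex geometry, while the paper stays entirely elementary. One small bookkeeping point worth making explicit: the initialisation of the induction requires $|T| < r_1 \approx \rho(1-|z|^L)/(1-|z|)$, which tends to $\infty$ as $z\to 1$ while $T$ stays bounded, so it is indeed always satisfiable by choosing $z\in U$ close enough to $1$; you gesture at this but a referee would likely want it spelled out.
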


We note that the openness of $f(U)$ for generic $f$ is a trivial consequence of the open mapping theorem for analytic functions in each of the cases, as such $f$ are non-constant. This implies that the real task in these questions is proving the density of the images.

We fix some notation for the rest of the paper. Throughout, $D_r=\{|z| < r\}$ for general disks centered at the origin, while its boundary is denoted by $S_r$ (that is, $D=D_1$, $S=S_1$.) We use the notation $\proj_n(G)$ for the projection of $G\subseteq (\lambda_n)_{n=0}^{\infty} \in \Omega$ to the $n$th coordinate of the product.

For any $\zeta\in S$ and any $N\in\NN$, define
\[
\{0,1\}_N[\zeta]:=\left\{\sum_{j=N}^{\infty} a_j \zeta^j: a_j\in\{0,1\},\ a_j=0\text{ with finitely many exceptions}\right\}.
\]
A terminology we are going to use frequently is the following. For sets $A,B\subseteq\CC$ and some $\eps>0$, we say that $A$ is an $\eps$-net of $B$, if for any $w\in B$, there exists some $z\in A$ such that $|z-w|<\eps$.

Below we will also use the notation $e(x)=e^{2\pi i x}$ for $x\in\CC$.

\textbf{Acknowledgement.} We are grateful to the anonymous referee for their careful reading of the manuscript and their suggestions to improve the exposition.

\section{Preliminary statements}

We fix the following notation: for any $c\in\CC$, set
\[
\Lambda+c:=\{\lambda+c:\lambda\in\Lambda\},\qquad c\Lambda:=\{c\lambda:\lambda\in\Lambda\},
\]
and 
\[\bm{\lambda}+c=(\lambda_n)_{n=0}^{\infty}+c := (\lambda_n + c)_{n=0}^{\infty},\qquad c\bm{\lambda}=c(\lambda_n)_{n=0}^{\infty}c: = (c\lambda_n)_{n=0}^{\infty}.\]
Set also
\[
f_{\Lambda}(U):=\bigcup_{\bm{\lambda}\in\Omega} f_{\bm{\lambda}}(U).
\]

Our first lemma concerns the effect on $f_{\bm{\lambda}}(U)$ of certain modifications on $\Lambda$, enabling us to circumvent some of the technical burden in the proof of our theorems through replacing arbitrary $\Lambda$'s by simpler ones.

\begin{lemma} \label{lemma:translate_Lambda} 
\begin{enumerate}[(a)]
\item\label{lemma:translate_mult} Let $U\subseteq D$ be open. Assume that $f_{\bm{\lambda}}(U)$ is dense in $\CC$. Then for any $0\neq c\in\CC$, $f_{c\bm{\lambda}}(U)$ is also dense in $\CC$.
\item\label{lemma:translate_add_shrink} Let $(U_k)_{k=1}^{\infty}$ be a shrinking sequence of open subsets of $D$ such that $\diam(U_k)\to 0$ and none of them accumulates to $1$. Moreover, assume that all of the sets $f_{\bm{\lambda}}(U_k)$ are dense in $\CC$. Then for any $c\in\CC$, all of the sets $f_{\bm{\lambda} + c}(U_k)$ are dense in $\mathbb{C}$. 
\item\label{lemma:translate_add_evade} Assume that $f_{\Lambda}(U)$ evades a closed real half-plane for some open $U\subseteq D$ with $1\notin \overline{U}$.
Then for any $c\in\CC$, the same holds for $f_{\Lambda+c}(U)$.
\item\label{lemma:genericity_effect} If the assumption of \ref{lemma:translate_mult} (resp. \ref{lemma:translate_add_shrink}) is a generic property in $\Omega_\Lambda$, then its implication is a generic property in $\Omega_{c\Lambda}$ (resp. $\Omega_{\Lambda + c}$).
\end{enumerate}
\end{lemma}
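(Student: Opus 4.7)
The whole lemma rests on two identities, immediate from the definition of $f_{\bm{\lambda}}$:
\[
f_{c\bm{\lambda}}(z) = c\, f_{\bm{\lambda}}(z), \qquad f_{\bm{\lambda}+c}(z) = f_{\bm{\lambda}}(z) + \frac{c}{1-z}.
\]
Part \ref{lemma:translate_mult} follows at once: $f_{c\bm{\lambda}}(U) = c\cdot f_{\bm{\lambda}}(U)$, and scalar multiplication by $c\neq 0$ is a homeomorphism of $\CC$, so it preserves density.

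For part \ref{lemma:translate_add_shrink}, I set $g(z)=c/(1-z)$ so that $f_{\bm{\lambda}+c}=f_{\bm{\lambda}}+g$. Since the $U_k$ are nested with vanishing diameters, any choice of representatives $z_k\in U_k$ is Cauchy with a common limit $z_0\in\bigcap_k\overline{U_k}$; the assumption that no $U_k$ accumulates at $1$ forces $z_0\neq 1$, so $g$ is continuous at $z_0$ and becomes nearly constant on $U_k$ once $k$ is large. Given $w\in\CC$ and $\eps>0$, I would first choose $k$ large enough that $|g(z)-g(z_0)|<\eps/2$ on $U_k$, then use density of $f_{\bm{\lambda}}(U_k)$ to pick $z\in U_k$ with $|f_{\bm{\lambda}}(z)-(w-g(z_0))|<\eps/2$; adding the two estimates yields $|f_{\bm{\lambda}+c}(z)-w|<\eps$. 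This gives density of $f_{\bm{\lambda}+c}(U_k)$ for all large $k$, and density for the remaining (smaller) indices is then inherited through the monotonicity $U_k\supseteq U_{k'}$. The same decomposition handles part \ref{lemma:translate_add_evade}: since $1\notin\overline{U}$, the function $g$ is bounded on $U$, say $g(U)\subseteq B(0,R)$, and
\[
f_{\Lambda+c}(U)\subseteq f_{\Lambda}(U)+g(U).
\]
If $f_{\Lambda}(U)$ is contained in some open half-plane $\{u:\mathrm{Re}(e^{i\theta}u)>t\}$, the right-hand side lies in the parallel open half-plane $\{u:\mathrm{Re}(e^{i\theta}u)>t-R\}$, whose complement is a closed real half-plane evaded by $f_{\Lambda+c}(U)$.

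Part \ref{lemma:genericity_effect} is purely topological: the maps $\bm{\lambda}\mapsto c\bm{\lambda}$ and $\bm{\lambda}\mapsto\bm{\lambda}+c$ act coordinatewise by continuous bijections with continuous inverses, hence define homeomorphisms $\Omega_\Lambda\to\Omega_{c\Lambda}$ and $\Omega_\Lambda\to\Omega_{\Lambda+c}$, under which residual sets remain residual. The only mildly delicate point in the whole argument is the handling of small $k$ in part \ref{lemma:translate_add_shrink}: there the shrinking hypothesis is used twice over, first to pin down $z_0\neq 1$ and tame $g$ for large indices, and again through nestedness to propagate density back to the smaller ones.
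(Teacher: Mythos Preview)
Your proof is correct and follows the same route as the paper's: both rest on the identities $f_{c\bm{\lambda}}=cf_{\bm{\lambda}}$ and $f_{\bm{\lambda}+c}=f_{\bm{\lambda}}+c/(1-z)$, use that $g(z)=c/(1-z)$ is nearly constant on the shrinking $U_k$ for \ref{lemma:translate_add_shrink} and bounded on $U$ for \ref{lemma:translate_add_evade}, and note that coordinatewise affine maps induce homeomorphisms of the configuration spaces for \ref{lemma:genericity_effect}. The paper compresses your argument for \ref{lemma:translate_add_shrink} into the single line ``$\diam(g(U_k))\to 0$, which easily implies the statement''; your explicit identification of the limit point $z_0\neq 1$ and the back-propagation via nestedness just unpack that claim.
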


\begin{remark} \label{remark:ref_comment}
The assumption of Lemma~\ref{lemma:translate_Lambda}~\ref{lemma:translate_add_shrink} looks a bit complicated and one may wonder if it can be formulated in a much simpler way, akin to Lemma~\ref{lemma:translate_Lambda}~\ref{lemma:translate_mult}. Notably, one can intuitively believe that assuming $\overline{U}\cap S\setminus \{1\} \neq \emptyset$, the density of $f_{\bm{\lambda}}(U)$ in $\CC$ implies that $f_{\bm{\lambda}+c}(U)$ is also dense in $\CC$ for any $c\in\CC$. (This formulation would be directly usable in the proof of Theorem \ref{thm:not_-1_1_arguments} and \ref{thm:not_1_arguments} to translate $\Lambda$.) However, this claim is false: a simple counterexample is given by 
$$\Lambda=\{0, 1\},\qquad U=D$$
and
$$f_{\bm{\lambda}}(z)=\sum_{k=0}^{\infty}z^{2k+1}=\frac{z}{1-z^2}.$$
Indeed, $f_{\bm{\lambda}}(z)=\alpha$ leads to a quadratic equation such that its roots has product -1. Consequently, it has a root in $\overline{D}$, which quickly yields the density of $f_{\bm{\lambda}}(D)$. However, 
$$f_{\bm{\lambda}-\frac{1}{2}}(z)=f_{\bm{\lambda}}(z) - \frac{1}{2}\cdot\frac{1}{1-z} = -\frac{1}{2(1+z)},$$
for which $f_{\bm{\lambda}-\frac{1}{2}}(U)$ is clearly not dense in $\mathbb{C}$.
\end{remark}

\begin{proof}[Proof of Lemma \ref{lemma:translate_Lambda}]
Statement \ref{lemma:translate_mult} follows trivially from the relation $f_{c\lambda}(U)=cf_{\lambda}(U)$. 

To prove \ref{lemma:translate_add_shrink} and \ref{lemma:translate_add_evade}, consider the mapping
\begin{equation} \label{eq:translation_correspondence}
f_{\bm{\lambda}} \mapsto f_{\bm{\lambda} + c},\qquad  f_{\bm{\lambda} + c}(z)= f_{\bm{\lambda}}(z) + \frac{c}{1-z} = f_{\bm{\lambda}}(z) + g(z).
\end{equation}
Now if $(U_k)_{k=1}^{\infty}$ is a sequence satisfying the conditions of \ref{lemma:translate_add_shrink}, we clearly have $\diam(g(U_k))\to 0$, which easily implies the statement due to \eqref{eq:translation_correspondence}.


As for \ref{lemma:translate_add_evade}, observe that in \eqref{eq:translation_correspondence}, $g(U)$ is bounded under the assumptions. Therefore if $f_{\Lambda}(U)$ is a subset of a closed real half-plane, then so is $f_{\Lambda+c}(U)$. 

Finally, let us observe that \ref{lemma:genericity_effect} is obvious.
\end{proof}

The following three lemmata serve as a preparation to the proof of Theorem~\ref{thm:not_-1_1_arguments}.

\begin{lemma} \label{lemma:dense_set_sum}
Assume that $(H_j)_{j=1}^{\infty}$ is a sequence of dense subsets of $S$. Then $\bigcup_{k=1}^{\infty}\sum\limits_{j=1}^{k}H_j$ is dense in $\mathbb{C}$, where $\sum\limits_{j=1}^{k}H_j$ denotes the Minkowski sum of $H_1, ..., H_k$.
\end{lemma}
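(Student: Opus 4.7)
The plan is to show that for an arbitrary target $w\in\mathbb{C}$ and $\eps>0$, we can realize $w$ as a sum of $k$ unit-modulus complex numbers (for some sufficiently large $k$) and then perturb each unit vector to an element of the corresponding $H_j$ using density. The conclusion then follows, since the total error is controlled by $k$ times the per-coordinate error.

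First I would note the elementary geometric fact that every $w\in\mathbb{C}$ with $|w|\le k$ can be written \emph{exactly} as $w=z_1+\cdots+z_k$ with each $z_j\in S$. The quickest justification is inductive: for $k=2$, any $w$ with $|w|\le 2$ is expressible as $z_1+z_2$ with $z_1,z_2\in S$ by taking $z_{1,2}=\frac{w}{2}\pm i\sqrt{1-|w/2|^2}\cdot\frac{w}{|w|}$ (and trivially for $w=0$); for larger $k$, one peels off a unit vector in the direction of $w$ and reduces the norm of the remainder by one, until one is in the regime $|{\cdot}|\le 2$.

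Then, given $w$ and $\eps>0$, I would pick any integer $k\ge|w|$ and write $w=z_1+\cdots+z_k$ with $z_j\in S$ as above. By the assumption that each $H_j$ is dense in $S$, pick $h_j\in H_j$ with $|h_j-z_j|<\eps/k$ for $j=1,\ldots,k$. Then
\[
\left|\sum_{j=1}^{k}h_j - w\right| \;=\; \left|\sum_{j=1}^{k}(h_j-z_j)\right| \;\le\; \sum_{j=1}^{k}|h_j-z_j| \;<\; \eps,
\]
and $\sum_{j=1}^{k}h_j$ lies in $\sum_{j=1}^{k}H_j\subseteq \bigcup_{k=1}^{\infty}\sum_{j=1}^{k}H_j$. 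Since $w\in\mathbb{C}$ and $\eps>0$ were arbitrary, density follows.

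The only non-automatic step is the geometric observation that every point in the closed disk of radius $k$ is an exact sum of $k$ unit vectors; after that, the density argument is a routine triangle-inequality approximation. There is no serious obstacle — the main subtlety is merely that one must allow $k$ to depend on $|w|$, which is why the statement is phrased with the union over all $k$ rather than a single fixed number of summands.
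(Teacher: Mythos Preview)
Your proof is correct and rests on the same geometric fact as the paper's: the Minkowski sum of $k$ copies of $S$ is $\overline{D_k}$, so once one knows this, density of each $H_j$ in $S$ finishes the job. The paper packages this via the closure identity $\overline{\sum_{j=1}^{k}H_j}=\sum_{j=1}^{k}\overline{H_j}=\overline{D_k}$ (using $S+S=\overline{D_2}$ and $D_r+S_{r'}=D_{r+r'}$), whereas you give the equivalent explicit $\eps/k$ approximation; the content is the same.
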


\begin{proof}
The proof follows from three simple observations:
\begin{itemize}
\item for any $k\geq{1}$, $$\overline{\left(\sum_{j=1}^{k}H_j\right)} = \sum_{j=1}^{k}\overline{H_j};$$
\item $S_1 + S_1 = \overline{D_2}$;
\item $D_r + S_{r'} = D_{r+r'}$ for $0<r'<r$.
\end{itemize}
Putting together these claims yields that $$\overline{\left(\sum_{j=1}^{k}H_j\right)}=\overline{D_k}$$ for $k\geq 2$. Consequently, $$\overline{\left(\bigcup_{k=1}^{\infty}\sum\limits_{j=1}^{k}H_j\right)}=\mathbb{C}$$ clearly holds.
\end{proof}

\begin{lemma}\label{lemma:circle_group_ae_Z_density}
Let $\zeta\in S$ be different from $\pm 1, \pm i, \pm \omega, \pm\omega^2$, where $i=e(1/4)$, $\omega=e(1/3)$. Then for any $N\in\NN$, $\{0,1\}_N[\zeta]$
is dense in $\CC$, i.e. $\{0,1\}_N[\zeta]$ is an $\eps$-net of $\CC$ for any $\eps>0$.
\end{lemma}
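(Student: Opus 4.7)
The plan is to split based on whether $\zeta$ is a root of unity and apply two qualitatively different arguments.

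\emph{Case 1: $\zeta$ is not a root of unity.} I would apply Lemma~\ref{lemma:dense_set_sum}. Partition $\{N, N+1, \ldots\}$ into infinitely many disjoint infinite arithmetic progressions $I_1, I_2, \ldots$; for instance, setting $I_k = \{j \geq N : v_2(j - N + 1) = k - 1\}$ assigns each $j$ to a unique $k$, and each $I_k$ is an arithmetic progression with common difference $2^k$. Let $H_k = \{\zeta^j : j \in I_k\} \subseteq S$. Since $\zeta$ is not a root of unity, neither is $\zeta^{2^k}$, so the orbit $\{(\zeta^{2^k})^m : m \geq 0\}$ is dense in $S$ by Weyl equidistribution, which gives that $H_k$ is dense in $S$. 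Lemma~\ref{lemma:dense_set_sum} then asserts that $\bigcup_{k=1}^{\infty} \sum_{j=1}^{k} H_j$ is dense in $\CC$; because the $I_k$ are pairwise disjoint subsets of $\{N, N+1, \ldots\}$, every element of this union has the form $\zeta^{j_1} + \cdots + \zeta^{j_k}$ with distinct indices $j_i \geq N$, hence lies in $\{0,1\}_N[\zeta]$. Density follows.

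\emph{Case 2: $\zeta$ is a primitive $n$-th root of unity.} The hypothesis excludes $n \in \{1, 2, 3, 4, 6\}$, so $n \geq 5$ and $n \neq 6$, giving $\varphi(n) \geq 4$. Since multiplication by $\zeta^N$ is an isometry of $\CC$, I may assume $N = 0$. For any $j \geq 0$ we have $\zeta^j = \zeta^{j \bmod n}$, and since each residue class mod $n$ contains infinitely many indices $j$, any non-negative integer can be realized as the total coefficient of $\zeta^{k}$ ($k = 0, \ldots, n-1$) in a $\{0,1\}$-sum. Hence $\{0,1\}_0[\zeta]$ equals the set of all non-negative integer combinations of $1, \zeta, \ldots, \zeta^{n-1}$, which together with the relation $1 + \zeta + \cdots + \zeta^{n-1} = 0$ coincides with $\ZZ[\zeta] := \ZZ + \ZZ\zeta + \cdots + \ZZ\zeta^{n-1}$.

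It remains to show $\ZZ[\zeta]$ is dense in $\CC$. Its closure is a closed subgroup of $\RR^2 \cong \CC$, and the classification of closed subgroups of $\RR^2$ allows only the possibilities $\{0\}$, $\ZZ v$, a lattice $\ZZ v + \ZZ w$, a line $\RR v$, a lamination $\RR v + \ZZ w$, or all of $\RR^2$. As $\varphi(n) \geq 4$, the elements $1, \zeta, \zeta^2, \zeta^3$ are $\QQ$-linearly independent, so $\ZZ[\zeta]$ has $\ZZ$-rank at least $4$, which rules out the three discrete options. The real-line option is ruled out because $\zeta \notin \RR$. Finally, multiplication by $\zeta$ is rotation by $2\pi/n$ and preserves $\ZZ[\zeta]$, hence its closure as well; but a lamination $\RR v + \ZZ w$ is preserved only by rotations whose angle is a multiple of $\pi$, which $2\pi/n$ is not for $n \geq 3$. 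Therefore the closure must be all of $\CC$.

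The main obstacle I anticipate is the root-of-unity case: individual or few-term sums $\zeta^{j_1} + \cdots + \zeta^{j_m}$ take values in a finite set, so the Minkowski-sum strategy of Case~1 is unavailable. The key realization is that the rich $\ZZ$-module structure of $\ZZ[\zeta]$ (rank $\geq 4$) combined with the rotational symmetry by a non-flat angle $2\pi/n$ leaves no option but density, which is what makes the exclusion of $n \in \{1,2,3,4,6\}$ exactly tight.
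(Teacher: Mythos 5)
Your proof is correct. The overall architecture matches the paper's: split into the irrational-argument case (handled via Lemma~\ref{lemma:dense_set_sum}) and the root-of-unity case, where the exclusion of $\pm 1, \pm i, \pm\omega, \pm\omega^2$ becomes $\varphi(n)\geq 4$. But the details diverge in both cases, and your root-of-unity argument is genuinely different. In the irrational case, the paper takes all $H_j$ equal to the single tail $\{\zeta^N,\zeta^{N+1},\ldots\}$ and then repairs possible index collisions in the resulting sum $\zeta^{n_1}+\cdots+\zeta^{n_k}$ by perturbing a repeated exponent to a nearby one, using density of the orbit; your partition of $\{N,N+1,\ldots\}$ into disjoint infinite arithmetic progressions (with $H_k$ supported on the $k$-th one) eliminates collisions by construction, which is a cleaner bookkeeping device at the price of a slightly fancier setup. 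In the root-of-unity case, the paper exploits the explicit algebraic fact that $\zeta+\zeta^{-1}\in\RR\setminus\QQ$ when $\varphi(n)>2$, deducing density of $\{0,1\}_N[\zeta]$ in $\RR$, then in $\zeta\RR$, then in $\RR+\zeta\RR=\CC$ by additivity. You instead identify $\{0,1\}_0[\zeta]$ with $\ZZ[\zeta]$ (a step the paper uses only implicitly), then invoke the classification of closed subgroups of $\RR^2$ and rule out each proper type via $\ZZ$-rank $\geq 4$ and the non-flat rotational symmetry by $\zeta$. Both routes are valid; the paper's is more elementary and self-contained, yours is more structural and makes transparent why the excluded cases ($\varphi(n)\leq 2$) are exactly the obstructions. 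One tiny imprecision: multiplication by $\zeta=e(k/n)$ with $\gcd(k,n)=1$ is rotation by $2\pi k/n$ rather than $2\pi/n$, but since that angle is still never $0$ or $\pi$ for $n\geq 3$, the symmetry argument is unaffected.
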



\begin{proof}
If $\zeta=e(x)$ with $x\in\RR\setminus\QQ$, then the statement follows simply from Lemma~\ref{lemma:dense_set_sum}. Indeed, if $w\in\CC$ and $\eps>0$ be given, our goal is to approximate $w$ with error smaller than $\eps$ with a finite sum of the form given in the statement. Setting \[H_1:=H_2:=\ldots:=\{\zeta^N,\zeta^{N+1},\zeta^{N+2},\ldots\}\] in Lemma~\ref{lemma:dense_set_sum}, we obtain a certain $z:=\zeta^{n_1}+\ldots+\zeta^{n_k}$ such that $|z-w|<\eps$. Possibly there are repetitions among the $n_j$'s, but any $\zeta^{n_j}$ can be replaced with another $\zeta^{n_{j'}}$ on the cost of an arbitrarily small error, which altogether verifies the claim.

If $\zeta=e(x)$ with $x\in\QQ$, a root of unity different from $\pm 1, \pm i, \pm \omega, \pm \omega^2$, then its degree over $\QQ$ is greater than $2$. In particular, $\zeta+\zeta^{-1}\in\RR\setminus \QQ$, and since $\zeta,\zeta^{-1}$ can be obtained as arbitrarily large powers of $\zeta$, we see that $\{0,1\}_N[\zeta]$ is dense in $\RR$ for any $N\in\NN$. Then obviously $\{0,1\}_N[\zeta]$ is dense also in $\zeta\RR$, hence in $\CC$, too.
\end{proof}

\begin{lemma}\label{lemma:circle_group_Z_approx_density_comp}
Let $\zeta\in S$ be different from $\pm 1$. Then $\{0,1\}_N[\zeta]$ is a $1$-net of $\CC$ for any $N\in\NN$.
\end{lemma}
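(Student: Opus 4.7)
The plan is to dispose of most values of $\zeta$ by quoting Lemma~\ref{lemma:circle_group_ae_Z_density}, and then to treat the finitely many remaining special roots of unity by exhibiting explicit full-rank lattices sitting inside $\{0,1\}_N[\zeta]$. Concretely, if $\zeta\in S\setminus\{\pm 1,\pm i,\pm\omega,\pm\omega^2\}$, the previous lemma already yields that $\{0,1\}_N[\zeta]$ is dense in $\CC$, which is strictly stronger than the claimed $1$-net property. Hence I would reduce immediately to the six exceptional values $\zeta\in\{\pm i,\pm\omega,\pm\omega^2\}$, each of which is a primitive $n$-th root of unity for some $n\in\{3,4,6\}$.

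For such a $\zeta$, the sequence $(\zeta^j)_{j\ge N}$ is periodic with period $n$, and each of the $n$-th roots of unity $1,\zeta,\ldots,\zeta^{n-1}$ occurs as $\zeta^j$ for infinitely many $j\ge N$. By picking sufficiently many distinct indices out of each residue class modulo $n$ (each contributing a coefficient $1$), I would therefore show
\[
\{0,1\}_N[\zeta]\supseteq\{m_0+m_1\zeta+\cdots+m_{n-1}\zeta^{n-1}:m_k\in\NN\}.
\]
The cyclotomic identity $1+\zeta+\cdots+\zeta^{n-1}=0$ then lets one add a large common integer to every $m_k$ without altering the sum, upgrading this inclusion to the full $\ZZ$-module $\ZZ+\ZZ\zeta+\cdots+\ZZ\zeta^{n-1}$. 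The latter is the Gaussian lattice $\ZZ[i]$ when $n=4$ and the Eisenstein lattice $\ZZ[\omega]$ when $n\in\{3,6\}$ (this requires a one-line check in the $n=6$ case, noting that $\zeta^2$ is a primitive cube root of unity and $\zeta^3=-1$).

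It only remains to observe that these two lattices are genuinely $1$-nets of $\CC$: the Voronoi cell of $\ZZ[i]$ is a unit square with circumradius $\sqrt{2}/2$, while that of $\ZZ[\omega]$ is a regular hexagon with circumradius $1/\sqrt{3}$, and both are strictly smaller than $1$. I do not anticipate any serious obstacle; the argument is essentially a three-way case split, the only quantitative input being these two classical covering radii. If anything, the trickiest bookkeeping is simply verifying that the $n=6$ cases $-\omega,-\omega^2$ still yield $\ZZ[\omega]$ rather than a sublattice, but this follows at once from $\zeta+\zeta^3=\zeta-1\in\ZZ[\omega]^\times$ generating everything needed.
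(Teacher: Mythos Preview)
Your proposal is correct and follows essentially the same approach as the paper: reduce to the six exceptional roots of unity via Lemma~\ref{lemma:circle_group_ae_Z_density}, then show that $\{0,1\}_N[\zeta]$ contains the full Gaussian or Eisenstein lattice, whose covering radius is strictly below~$1$. The only cosmetic difference is that the paper handles all six cases uniformly via the common period~$12$ (writing $-1=\sum_{j=1}^{11}\zeta^j$ and resolving repetitions through $\zeta^k=\zeta^{k+12}$), whereas you work with the minimal period $n\in\{3,4,6\}$ separately in each case.
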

\begin{proof}
If $\zeta\neq \pm i,\pm\omega,\pm\omega^2$, then the statement is obvious from Lemma~\ref{lemma:circle_group_ae_Z_density}. Otherwise, we may assume $N=0$, and if $\zeta=\pm i$ (resp. $\zeta=\pm \omega$ or $\zeta=\pm \omega^2$), then $\ZZ[\zeta]\subset\CC$ is the lattice of Gaussian (resp. Eulerian) integers, which are known from elementary geometry to satisfy that for any $w\in\CC$, there exist $a_0,a_1\in\ZZ$ such that
\[
\left|w-(a_0+a_1\zeta)\right|<1.
\]
Again, the potientially negative coefficients can be switched to a sum of positive ones by recording
\[
-1=\sum_{j=1}^{11} \zeta^j,\qquad -\zeta=\sum_{j=2}^{12} \zeta^j,
\]
and repetitions can be treated via $\zeta^k=\zeta^{k+12}$.
\end{proof}

The following three lemmata serve as a preparation to the proof of Theorem \ref{thm:all_arguments}.

\begin{lemma} \label{lemma:get_around}
If $\Lambda \subseteq \CC$ is not contained by a closed real half-plane of the form $\{z: \text{ } \alpha \leq \arg z \leq \alpha + \pi\}$, then we can find $\Delta_0=\Delta_4, \Delta_1, \Delta_2, \Delta_{3}$ elements of $\Lambda$ such that
\begin{equation} \label{eq:consecutive_arguments}
0\leq \arg{\frac{\Delta_{j+1}}{\Delta_{j}}}<\pi
\end{equation}
for any $0\leq j \leq 3$.
\end{lemma}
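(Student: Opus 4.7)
My plan is to first reduce to the case $\Delta_0 = 1$. The hypothesis precludes $\Lambda = \{0\}$, so I pick any nonzero $\Delta_0 \in \Lambda$; since multiplication by a nonzero constant permutes the half-planes of the form $\{z : \alpha \leq \arg z \leq \alpha + \pi\}$ and preserves each ratio $\arg(\Delta_{j+1}/\Delta_j)$, I may replace $\Lambda$ by $\Delta_0^{-1}\Lambda$ and assume $\Delta_0 = 1 \in \Lambda$. An immediate consequence of the hypothesis is that $\arg(\Lambda)$ meets both open arcs $(0, \pi)$ and $(\pi, 2\pi)$, since otherwise $\Lambda$ would fit inside one of the closed half-planes $\{z : \pi \leq \arg z \leq 2\pi\}$ or $\{z : 0 \leq \arg z \leq \pi\}$.

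I would then split into two cases according to whether $\pi \in \arg(\Lambda)$. When $\pi \in \arg(\Lambda)$, I pick $\Delta_1, \Delta_2, \Delta_3 \in \Lambda$ with $\arg \Delta_1 \in (0, \pi)$, $\arg \Delta_2 = \pi$, and $\arg \Delta_3 \in (\pi, 2\pi)$; the four arcs $\arg \Delta_1$, $\pi - \arg \Delta_1$, $\arg \Delta_3 - \pi$, and $2\pi - \arg \Delta_3$ then all lie in $(0, \pi)$ by direct inspection.

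For the remaining case $\pi \notin \arg(\Lambda)$, set $\alpha^* = \sup(\arg(\Lambda) \cap (0, \pi))$ and $\beta^* = \inf(\arg(\Lambda) \cap (\pi, 2\pi))$. The key inequality to establish is $\beta^* < \alpha^* + \pi$: if instead $\beta^* \geq \alpha^* + \pi$, then using $\pi \notin \arg(\Lambda)$ we would have $\arg(\Lambda) \subseteq [0, \alpha^*] \cup [\beta^*, 2\pi)$, which fits inside the closed arc running from $\beta^*$ counterclockwise through $0$ to $\alpha^*$, of length $2\pi - (\beta^* - \alpha^*) \leq \pi$, contradicting the hypothesis. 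Once $\beta^* < \alpha^* + \pi$ is in hand, I pick $\alpha \in \arg(\Lambda) \cap (0, \pi)$ with $\alpha > \beta^* - \pi$ (possible since $\alpha^* > \beta^* - \pi$) and then $\beta \in \arg(\Lambda) \cap (\pi, \alpha + \pi)$ (nonempty because $\beta^* < \alpha + \pi$). Taking $\Delta_1, \Delta_2 \in \Lambda$ with arguments $\alpha, \beta$ and setting $\Delta_3 = \Delta_0 = 1$, the four arcs become $\alpha$, $\beta - \alpha$, $2\pi - \beta$, and $0$, all in $[0, \pi)$. I expect the extremal-argument deduction $\beta^* < \alpha^* + \pi$ to be the main obstacle; the subsequent choices of $\alpha$ and $\beta$ require only mild care regarding possible non-attainment of the supremum/infimum.
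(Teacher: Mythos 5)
Your proof is correct. The overall strategy is the same as the paper's: normalize to $1 \in \Lambda$, then pick the remaining $\Delta_j$'s by an extremal-argument argument. The decomposition differs, though. The paper avoids an upfront case split: it sets $\beta = \sup_{\Im\lambda > 0}\arg\lambda$, deduces directly that $\{\lambda\in\Lambda: \pi\le \arg\lambda < \beta+\pi\}$ is nonempty (combining the half-plane hypothesis with the definition of the supremum), picks $\Delta_2$ from this set, picks $\Delta_1$ with argument just below $\beta$ (close enough that $\arg\Delta_2-\arg\Delta_1 < \pi$), and finally distinguishes only at the very end between $\arg\Delta_2 > \pi$ (take $\Delta_3 = \Delta_2$) and $\arg\Delta_2 = \pi$ (take $\Delta_3$ from the lower half-plane). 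Your version cases on $\pi\in\arg(\Lambda)$ at the outset and, in the harder case, works with both a supremum $\alpha^*$ and an infimum $\beta^*$, proving the key inequality $\beta^* < \alpha^* + \pi$. The two routes are comparable in length; the paper's is marginally more economical by working with a single extremal quantity, while yours makes the geometric obstruction ($\beta^*\ge\alpha^*+\pi$ would trap $\Lambda$ in a half-plane) a bit more explicit. Both are sound.
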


\begin{proof}
Multiplying $\Lambda$ by a nonzero scalar does not change the assumption, nor the implication. Consequently, we can assume $1\in \Lambda$. Let $\Delta_0 = 1$. By the condition on $\Lambda$, $\{\lambda: \Im\lambda>0\}$ is nonempty. Consequently, we can define
$$\beta:=\sup_{\lambda\in \Lambda, \text{ } \Im\lambda>0} \arg{\lambda}.$$
Now by the same argument, $\{\lambda\in \Lambda: \text{ } \beta < \arg \lambda < \beta + \pi\}$ is nonempty. However, due to the definition of $\beta$, we can deduce that $\{\lambda\in \Lambda: \text{ } \pi \leq \arg \lambda < \beta + \pi\}$ is nonempty. Let $\Delta_2$ be an element of it. Due to the definition of $\beta$, we can find $\Delta_1$ such that \eqref{eq:consecutive_arguments} is satisfied for $j=0, 1$. Now if $\arg{\Delta_2} > \pi$, the choice $\Delta_3 = \Delta_2$ guarantees that it is also satisfied for $j=2, 3$ and we are done. Otherwise we can choose $\Delta_3$ to be any element of the necessarily nonempty $\{\lambda\in \Lambda, \text{ } \Im\lambda<0\}$, which yields \eqref{eq:consecutive_arguments} for $j=2, 3$.
\end{proof}

\begin{lemma} \label{lemma:reduce_absolute_value}
Assume that $\Lambda \subseteq \CC$ is not contained by a closed real half-plane of the form $\{z: \text{ } \alpha \leq \arg z \leq \alpha + \pi\}$. Then there exists an appropriate $R>0$ with the property that for any $z\in\CC$ satisfying $|z|>R$, we can find $\lambda\in \Lambda$ such that $|z+\lambda|<|z|$.
\end{lemma}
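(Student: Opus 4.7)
The plan is to reduce the problem to a simple geometric/compactness statement by using Lemma~\ref{lemma:get_around}. The natural expansion is
\[
|z+\lambda|^2 - |z|^2 = 2\,\mathrm{Re}(\bar{z}\lambda) + |\lambda|^2 = 2|z|\,\mathrm{Re}\!\left(\overline{\tfrac{z}{|z|}}\,\lambda\right) + |\lambda|^2,
\]
so writing $u := z/|z|$, the condition $|z+\lambda| < |z|$ becomes $\mathrm{Re}(\bar{u}\lambda) < -|\lambda|^{2}/(2|z|)$. If we can uniformly find, for each unit direction $u$, some $\lambda\in\Lambda$ with $\mathrm{Re}(\bar{u}\lambda) \leq -\delta$ for a fixed $\delta>0$ (independent of $u$), then taking $M := \max_{\lambda\in\Lambda}|\lambda|$ (finite because $\Lambda$ is bounded) and $R := M^{2}/(2\delta)$ will do the job.

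To produce this uniform $\delta$, I would apply Lemma~\ref{lemma:get_around} to obtain elements $\Delta_0,\Delta_1,\Delta_2,\Delta_3\in\Lambda$ satisfying \eqref{eq:consecutive_arguments}. The key geometric observation is that these four elements cannot all lie in a single closed half-plane through the origin. Indeed, the four consecutive angular gaps in \eqref{eq:consecutive_arguments} each lie in $[0,\pi)$ and sum to a multiple of $2\pi$; they cannot all be $0$ (otherwise the $\Delta_j$ would be positive multiples of one another, violating the hypothesis on $\Lambda$), so they sum to exactly $2\pi$. If all four $\Delta_j$ were contained in a closed half-plane through $0$, their arguments would fit in an arc of length $\pi$, forcing the complementary cyclic gap to be $\geq \pi$, a contradiction.

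Consequently, for every unit vector $u$, the quantity
\[
\phi(u) := \min_{0\leq j\leq 3} \mathrm{Re}(\bar{u}\,\Delta_j)
\]
is strictly negative, because the assumption $\phi(u)\geq 0$ would place all $\Delta_j$ in the closed half-plane $\{w:\mathrm{Re}(\bar u w)\geq 0\}$. Since $\phi$ is a minimum of finitely many continuous functions on the compact unit circle $S$, it attains its maximum there; let $-\delta$ denote this maximum, with $\delta>0$.

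Putting it together: for any $z$ with $|z|>R = M^{2}/(2\delta)$, set $u=z/|z|$, pick $j$ with $\mathrm{Re}(\bar{u}\,\Delta_j)\leq -\delta$, and take $\lambda=\Delta_j$. Then
\[
|z+\lambda|^{2} - |z|^{2} \leq -2\delta|z| + M^{2} < 0,
\]
finishing the argument. The only non-routine step in the plan is the half-plane argument in the second paragraph, which pins down why the conclusion of Lemma~\ref{lemma:get_around} is precisely the right hypothesis to prevent $\Lambda$ from lying on one side of any line through $0$.
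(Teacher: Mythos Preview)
Your argument is correct and follows the same strategy as the paper's: both invoke Lemma~\ref{lemma:get_around} to obtain $\Delta_0,\dots,\Delta_3$ and then show that for every direction some $\Delta_j$ points ``against'' it by a definite amount. The paper phrases this geometrically, selecting $j$ so that $\arg(\Delta_j/z)$ falls in a prescribed subinterval of $(\pi/2,3\pi/2)$ and then reading off a triangle comparison, whereas you use the expansion of $|z+\lambda|^2-|z|^2$ together with compactness of the unit circle to produce the uniform $\delta$; your formulation is a bit cleaner and gives an explicit $R=M^2/(2\delta)$.

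One small slip: your justification that the four gaps cannot all be $0$ ``violating the hypothesis on $\Lambda$'' is not quite right---the $\Delta_j$ lying on a single ray says nothing about the rest of $\Lambda$. The honest fix is to observe that the \emph{construction} in the proof of Lemma~\ref{lemma:get_around} visibly yields $\Delta_0$ and $\Delta_1$ with distinct arguments (after the normalization there, $\Delta_0=1$ and $\Im\Delta_1>0$), so at least one gap is positive and the cyclic sum is forced to be $2\pi$. With that in place your half-plane/compactness argument goes through verbatim.
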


\begin{proof}
Fix $\Delta_0, \Delta_1, \Delta_2, \Delta_{3}$ as guaranteed by Lemma \ref{lemma:get_around}. Let 
$$\alpha_0:=\max_{0\leq j \leq 3}\arg{\frac{\Delta_{j+1}}{\Delta_{j}}}<\pi.$$
Now, if $z \neq 0$ is arbitrary, we can find $0\leq j \leq 3$ such that for $\Delta_j$ we have $\frac{\pi+\alpha_0}{2} \leq \arg \frac{\Delta_j}{z} < \frac{3\pi-\alpha_0}{2}$. Consequently, if we consider the triangle determined by $0, z, z+\Delta_j$, we have that the angle at $z$ is smaller than the right angle, and its size is bounded away from $\frac{\pi}{2}$ by some positive quantity. As the set of all $\Delta_j$'s is bounded, this implies that if $|z|\geq R$ for large enough $R$, then the side $[0,z]$ of the triangle is larger than the side $[0,z+\Delta_j]$. Defining $R$ accordingly proves the lemma.
\end{proof}

Note that if $R$ is sufficient for some $\Lambda$ in the setup of Lemma \ref{lemma:reduce_absolute_value}, then $cR$ is sufficient for $c\Lambda$. In particular, if $c<1$, the same $R$ can be used.

\begin{lemma} \label{lemma:small_absolute_value}
Assume that $\Lambda \subseteq \CC$ is not contained by a closed real half-plane of the form $\{z: \text{ } \alpha \leq \arg z \leq \alpha + \pi\}$. Then there exists $R^*>0$ such that for any $z$, $|z|<1$, and $0\leq n_0<n_1< \ldots$, there exists $(\lambda_{n_j})_{j=0}^{\infty}$, $\lambda_{n_j}\in \Lambda$ such that $\left|\sum_{j=0}^{\infty} \lambda_{n_j}z^{n_j}\right| \leq R^*$.
\end{lemma}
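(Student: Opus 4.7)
The plan is to construct the coefficient sequence greedily, using Lemma~\ref{lemma:reduce_absolute_value} to keep the partial sums under control. I would let $R$ be the constant produced by Lemma~\ref{lemma:reduce_absolute_value}, let $M := \sup_{\lambda\in\Lambda}|\lambda|$ (which is finite because $\Lambda$ is bounded), and set $R^* := R+M$. The case $z=0$ is trivial, so I would assume $z\neq 0$ throughout.

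Denote the partial sums by $T_k := \sum_{j=0}^{k}\lambda_{n_j} z^{n_j}$. At step $k$, the choice of $\lambda_{n_k}$ will be guided by the quantity $u_k := T_{k-1}/z^{n_k}$. If $|u_k|\leq R$, i.e.\ $|T_{k-1}|\leq R|z|^{n_k}$, I would pick $\lambda_{n_k}\in\Lambda$ arbitrarily and estimate
\[
|T_k|\leq |T_{k-1}|+M|z|^{n_k}\leq (R+M)|z|^{n_k}\leq R+M,
\]
using $|z|^{n_k}\leq 1$. If instead $|u_k|>R$, Lemma~\ref{lemma:reduce_absolute_value} supplies some $\lambda_{n_k}\in\Lambda$ with $|u_k+\lambda_{n_k}|<|u_k|$, and multiplying through by $|z|^{n_k}$ gives $|T_k|<|T_{k-1}|$. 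A straightforward induction, with base $|T_0|\leq M\leq R^*$, then delivers the uniform bound $|T_k|\leq R^*$ for every $k$.

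To finish, I would pass from partial sums to the infinite series. Since $(n_j)$ is strictly increasing in $\NN$ and $|z|<1$, we have $\sum_{j\geq 0}|\lambda_{n_j}z^{n_j}|\leq M/(1-|z|)<\infty$, so $T_k$ converges to $T := \sum_{j=0}^{\infty}\lambda_{n_j}z^{n_j}$, and $|T|\leq R^*$ follows by passing to the limit.

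The whole argument is quite short once the right invariant is identified. The only slightly delicate point is noticing that Lemma~\ref{lemma:reduce_absolute_value} should be applied to the rescaled quantity $T_{k-1}/z^{n_k}$ rather than to $T_{k-1}$ itself; that renormalization is precisely what prevents the bound from deteriorating as the exponents $n_k$ grow.
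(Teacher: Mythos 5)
Your greedy construction is exactly the paper's argument: the authors also set $R^*=R+\sup_\Lambda|\lambda|$ and recursively pick $\lambda_{n_{k+1}}$ arbitrarily when the partial sum is already $\le R$ and otherwise invoke Lemma~\ref{lemma:reduce_absolute_value} (applied to the rescaled set $z^{n_{k+1}}\Lambda$, with the observation that the same $R$ works since $|z|<1$) to strictly decrease the partial sum. Your reformulation via $u_k=T_{k-1}/z^{n_k}$ is just a cosmetic renormalization of the same step, and the remaining details (base case, absolute convergence, passing to the limit) match.
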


\begin{proof}
We prove that $R^*=R+ \sup_{\lambda\in\Lambda} |\lambda|$ is sufficient, where $R$ is the one guaranteed by Lemma \ref{lemma:reduce_absolute_value}. Due to the note following its proof, the same $R$ can be used for any coefficient set of the form $z^n \Lambda$.

For $z=0$, the claim is trivial, regardless of the choice of $(\lambda_{n_j})_{j=0}^{\infty}$. Hence fix $z\neq 0$, $|z|<1$. The proof depends on a recursive construction of the sequence $(\lambda_{n_j})$. Notably, let $\lambda_{n_0}\in\Lambda$ be arbitrary, and assume $\lambda_{n_0}, \ldots, \lambda_{n_k}$ are already defined. If
$$\left|\sum_{j=0}^{k}\lambda_{n_j}z^{n_j}\right|\leq R,$$
then $\lambda_{n_{k+1}}\in\Lambda$ can be chosen arbitrarily as well. Otherwise, we apply Lemma~\ref{lemma:reduce_absolute_value} to $z^{n_{k+1}}\Lambda$ to define $\lambda_{n_{k+1}}$ such that
$$\left|\sum_{j=0}^{k+1}\lambda_{n_j}z^{n_j}\right|<\left|\sum_{j=0}^{k}\lambda_{n_j}z^{n_j}\right|.$$
These choices obviously guarantee that 
$$\left|\sum_{j=0}^{k}\lambda_{n_j}z^{n_j}\right| \leq R+ \max_{\lambda\in\Lambda} |\lambda| = R^*,$$
regardless of the value of $k$. Consequently, the same bound holds for the sum of the series as well.
\end{proof}

\section{Proof of Theorem~\ref{thm:not_-1_1_arguments}}

Assume first that $0,1\in\Lambda$. For any fixed $w\in\CC$ and $\eps>0$, we introduce
\[
A_{w,\eps}:=\{\bm{\lambda}=(\lambda_n)_{n\in\NN}:\text{there exists some $\tau\in U$ such that $|f_{\bm{\lambda}}(\tau)-w|<\eps$}\}.
\]
Fixing $w\in\CC$ and $\eps>0$, we introduce the abbreviation $A:=A_{w,\eps}$, and prove below that it is open and dense in $\Omega$.

To see that $A$ is open in $\Omega$, let $\bm{\lambda}=(\lambda_n)_{n\in\NN}\in A$, i.e. for some $\tau\in U$, $\eps_0:=|f_{\bm{\lambda}}(\tau)-w|<\eps$. Let $N$ be large enough to satisfy that
\[
\sum_{n>N} \sup\{|\lambda|:\lambda\in\Lambda\} \tau^n < \frac{\eps-\eps_0}{2}.
\]
Also, choose $\delta>0$ in such a way that if $|\lambda_n'-\lambda_n|<\delta$ for all $n\leq N$, then
\[
\left|\sum_{n\leq N} \lambda_n \tau^n - \sum_{n\leq N} \lambda_n' \tau^n\right|< \frac{\eps-\eps_0}{2}.
\]
Clearly, if
\[
\bm{\lambda}'=(\lambda_n')_{n\in\NN} \in \bigtimes_{n\leq N}\{\lambda_n':|\lambda_n'-\lambda_n|<\delta\} \times \bigtimes_{n>N} \Lambda\subseteq \Omega,
\]
then
\[
|f_{\bm{\lambda}'}(\tau)-w|<\eps,
\]
hence $\bm{\lambda}'\in A$, which shows that $A$ is open.

Now we prove that $A$ is dense in $\Omega$. It suffices to show that $A$ intersects any set of the form \[G:=\{\lambda_0\}\times\ldots\times\{\lambda_N\}\times \bigtimes_{n>N} \Lambda\subseteq \Omega.\] Let us fix some $\pm 1\neq \zeta\in \overline{U} \cap S$ throughout the proof. Our goal is to find an element $\bm{\lambda}=(\lambda_n)_{n\in\NN}\in G$ and some $\tau\in U$ such that $|f_{\bm{\lambda}}(\tau)-w|<\eps$. We immediately prescribe $|\tau-\zeta|<\delta$ with $0<\delta<2/5$ chosen in such a way that
\[
\left|\sum_{n\leq N} \lambda_n \tau^n - \sum_{n\leq N} \lambda_n \zeta^n \right|<\frac{\eps}{2}
\]
is guaranteed by $|\tau-\zeta|<\delta$.

Relabeling our original $w$ (shifting it by $- \sum_{n\leq N} \lambda_n \zeta^n$), and rescaling $\eps$, we have to find a sequence $(\lambda_n)_{n>N}\in \{0,1\}^{n>N}$ and some $\tau\in U$ in such a way that
\begin{equation}\label{eq:pf_thm_1_density_goal}
\left|\sum_{n>N} \lambda_n \tau^n - w\right|<\eps,\qquad |\tau-\zeta|<\delta.
\end{equation}
Now we go by cases according to Lemmata~\ref{lemma:circle_group_ae_Z_density}--\ref{lemma:circle_group_Z_approx_density_comp} about the nature of $\zeta$. To avoid notational difficulties, assume that $w\neq 0$ (which is not a real restriction, since if anything but zero can be arbitrarily approximated, then so can zero).

If $\zeta\neq \pm i,\pm \omega, \pm \omega^2$, then by Lemma~\ref{lemma:circle_group_ae_Z_density}, we may find and fix a \emph{finite} sequence $(\lambda_n)_{N<n<K}$ satisfying
\[
\left|\sum_{N<n<K} \lambda_n \zeta^n - w\right|<\frac{\eps}{2}, \qquad \lambda_n\in\{0,1\},
\]
and then if $|\tau-\zeta|$ is small enough (consistently with the earlier prescribed $|\tau-\zeta|<\delta$),
\[
\left|\sum_{N<n<K} \lambda_n \tau^n - \sum_{N<n<K} \lambda_n \zeta^n\right|<\frac{\eps}{2}, \qquad |\tau-\zeta|<\delta,
\]
which, setting $\lambda_K:=\lambda_{K+1}:=\ldots:=0$, together clearly imply \eqref{eq:pf_thm_1_density_goal}.

Now assume that $\zeta\in\{\pm i,\pm \omega, \pm\omega^2\}$. Fix a finite set $W$ which is a $1$-net of $D_{10|w|/\eps}$. By Lemma~\ref{lemma:circle_group_Z_approx_density_comp}, for any $w'\in W$, we may find and fix a \emph{finite} sequence $(\lambda_n)_{N<n<K}(w')$ satisfying
\[
\left|\sum_{N<n<K} \lambda_n(w') \zeta^n - w'\right|<1, \qquad \lambda_n(w')\in\{0,1\},
\]
where the upper bound $K$ on the coefficient indices is uniform over $w'\in W$ (this can be achieved, since $W$ is finite). Now if $|\tau-\zeta|$ is small enough (consistently with the earlier prescribed $|\tau-\zeta|<\delta$), then
\[
\left|\sum_{N<n<K} \lambda_n(w') \tau^n - \sum_{N<n<K} \lambda_n(w') \zeta^n\right|<1\text{ for all $w'\in W$}, \qquad |\tau-\zeta|<\delta.
\]
Fix now $M$ in such a way that $\eps/5<|\tau^M|<\eps/3$ (this is possible, since $\delta<2/5$ implies $|\tau|>3/5$, and that $\tau$ has a power in the indicated annulus). Then
\[
\left\{\sum_{N<n<K} \lambda_n(w') \tau^{n+M}:w'\in W\right\}
\]
gives rise to an $\eps$-net of $D_{2|w|}$. In particular, choosing the appropriate $w'$ (for which the sum in the last display is closest to $w$), and setting
\[
\lambda_n:=
\begin{cases}
\lambda_n,\qquad &\text{if $n\leq N$,}\\
\lambda_{n-M}(w'),& \text{if $N+M<n<K+M$,}\\
0& \text{otherwise,} 
\end{cases}
\]
\eqref{eq:pf_thm_1_density_goal} is achieved.

To sum up, any $A_{w,\eps}$ is open and dense, which in turn implies that the set
\[
\bigcap_{w\in \QQ+\QQ i} \bigcap_{k=1}^{\infty} A_{w,1/k}
\]
is residual, hence the proof of Theorem~\ref{thm:not_-1_1_arguments} is complete, at least, when $0,1\in\Lambda$. This, however, immediately gives rise to the general case by applying Lemma~\ref{lemma:translate_Lambda}~\ref{lemma:translate_mult}--\ref{lemma:translate_add_shrink}. Indeed, the proof of the density of $A$ presented above guarantees $\tau$'s arbitrarily close to $\zeta$, which means that the condition of Lemma~\ref{lemma:translate_Lambda}~\ref{lemma:translate_add_shrink} is satisfied.

A fairly straightforward consequence of Theorem \ref{thm:not_-1_1_arguments} is the following:

\begin{corollary}
For the generic $(\lambda_n)_{n=0}^{\infty}\in\Omega$, for any $\zeta \in S$ and $w\in\CC$ there exists $(\zeta_k)_{k=1}^{\infty}\subseteq D$ with $\zeta_k\to \zeta$ such that $f(\zeta_k)\to w$.
\end{corollary}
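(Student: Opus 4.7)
The plan is to reduce the assertion to Theorem~\ref{thm:not_-1_1_arguments} applied to a countable family of open sets whose closures cover every possible boundary point. Fix a countable dense subset $Q$ of $S\setminus\{-1,1\}$ (e.g., the non-real roots of unity), and for each $q\in Q$ and each $n\in\NN$ set
\[
U_{q,n}:=\{z\in D:|z-q|<1/n\}.
\]
Each $U_{q,n}$ is an open subset of $D$ having $q\in S$ as an accumulation point with non-vanishing imaginary part (witnessed, for example, by the radial segment from $0$ to $q$). Consequently Theorem~\ref{thm:not_-1_1_arguments} furnishes, for every pair $(q,n)$, a residual set of configurations $\bm{\lambda}\in\Omega$ for which $f_{\bm{\lambda}}(U_{q,n})$ is dense in $\CC$. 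The countable intersection
\[
\mathcal{R}:=\bigcap_{q\in Q,\,n\in\NN}\{\bm{\lambda}\in\Omega:f_{\bm{\lambda}}(U_{q,n})\text{ is dense in }\CC\}
\]
is then residual in $\Omega$.

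Next I would verify that any $\bm{\lambda}\in\mathcal{R}$ satisfies the conclusion. Given $\zeta\in S$ (possibly $\pm 1$) and $w\in\CC$, choose for each $k\in\NN$ some $q_k\in Q$ with $|q_k-\zeta|<1/(2k)$ (using density of $Q$ in $S$), and then exploit density of $f_{\bm{\lambda}}(U_{q_k,2k})$ in $\CC$ to pick $\zeta_k\in U_{q_k,2k}\subseteq D$ with $|f_{\bm{\lambda}}(\zeta_k)-w|<1/k$. The triangle inequality yields
\[
|\zeta_k-\zeta|\leq|\zeta_k-q_k|+|q_k-\zeta|<\frac{1}{2k}+\frac{1}{2k}=\frac{1}{k},
\]
so $\zeta_k\to\zeta$ while $f_{\bm{\lambda}}(\zeta_k)\to w$, which is exactly the required conclusion.

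I do not foresee a real obstacle here. The only mild subtlety is that Theorem~\ref{thm:not_-1_1_arguments} requires a \emph{non-real} accumulation point on $S$, so when $\zeta=\pm 1$ one cannot simply take shrinking neighborhoods of $\zeta$ itself; instead one must route the approximation through nearby non-real points $q_k\in Q$. Since the corollary imposes no constraint on the path along which $\zeta_k\to\zeta$, this detour costs nothing.
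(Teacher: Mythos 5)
Your proof is correct and fills in exactly what the paper leaves as a hint ("it suffices to rely on the case of irrational arguments"): choose a countable dense family of non-real boundary points, shrink neighborhoods around them, intersect the resulting residual sets from Theorem~\ref{thm:not_-1_1_arguments}, and approximate an arbitrary $\zeta\in S$ (including $\pm 1$) through nearby points of the family. The only cosmetic difference is that you use non-real roots of unity rather than irrational-argument points as the countable dense set, which works equally well since Theorem~\ref{thm:not_-1_1_arguments} only requires the accumulation point to have non-vanishing imaginary part.
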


The proof is left as a simple exercise to the reader, it suffices to rely on the case of irrational arguments. With a slightly different formulation, proving this statement was Problem 9 at the prestigious Miklós Schweitzer Memorial Competition for Hungarian university students in 2020, proposed by the authors. Complete solutions were given by Márton Borbényi and Attila Gáspár, who were awarded the two first prizes of the contest, and to whom we congratulate hereby. A direct solution is available at \cite{Schw} in Hungarian.

\section{Proof of Theorem \ref{thm:not_1_arguments}}

Due to the open mapping theorem, it suffices to prove that the density of $f(U)$ holds generically.

Due to Lemma~\ref{lemma:translate_Lambda}~\ref{lemma:translate_mult}--\ref{lemma:translate_add_shrink}, we can assume $0, 1 \in \Lambda$. First we will consider the case when $\Lambda$ is contained by a real line. Following our assumption, this means $\Lambda\subseteq \mathbb{R}$.

We will define $U=\bigcup_{k=1}^{\infty}U_k$ where 
$$U_k=\left\{z: -\frac{k-1}{k} < \Re z <0, \text{ } \pi-\alpha_k < \arg{z} < \pi+\alpha_k \right\}$$
for $\alpha_k>0$ to be fixed later. (The lower bound on $\Re z$ is somewhat arbitrary, separation from $-1$ is relevant only.) Right now we specify only that $\alpha_k$ is small enough to guarantee that $\overline{U_k}\subseteq D$. 

As each $U_k$ is open, the same holds for $U$. Now consider any $z\in U_k$. As $\overline{U_k}\subseteq S$, we can choose $N$ large enough to have 
\begin{equation} \label{eq:N_choice}
\left|\sum_{n=2N}^{\infty}z^n\right|<1.
\end{equation}
We choose $\alpha_k$ based on the choice of $N$ such that
$$2N\alpha_k<\arcsin\left(\frac{1}{N}\right).$$
This clearly implies that 
$$-\arcsin\left(\frac{1}{N}\right)<\arg{\left(\sum_{n=0}^{N-1}z^{2n}\right)}<\arcsin\left(\frac{1}{N}\right)$$
and
$$\pi-\arcsin\left(\frac{1}{N}\right)<\arg{\left(\sum_{n=0}^{N-1}z^{2n+1}\right)}<\pi+\arcsin\left(\frac{1}{N}\right).$$
However, the absolute value of each of these partial sums is at most $N$, which yields that their imaginary part is at most $1$. Consequently, 
$$\Im\left(\sum_{n=0}^{2N-1}z^n\right)\leq 2.$$
By the same argument for any $(\lambda_n)_{n=0}^{\infty}$ we have
$$\Im\left(\sum_{n=0}^{2N-1}\lambda_nz^n\right)\leq 2\sup_{\lambda\in \Lambda}|\lambda|.$$
Taking \eqref{eq:N_choice} into consideration implies
$$\Im\left(\sum_{n=0}^{\infty}\lambda_nz^n\right)\leq 3\sup_{\lambda\in \Lambda}|\lambda|.$$
As it holds for any $k$ and $z\in U_k$, we have it for any $z\in U$, which concludes the proof of the first part.

In the other direction, our argument will be similar to the one we have given in the proof of Theorem \ref{thm:not_-1_1_arguments}. Indeed, defining the set $A$ as above, and following the argument verbatim, it suffices to find a sequence $(\lambda_n)_{n>N}\in \{0,1\}^{n>N}$ and some $\tau\in U$ in such a way that



\[
\left|\sum_{n>N} \lambda_n \tau^n - w\right|<\eps,\qquad |\tau-(-1)|<\delta.
\]

Fix an element $\lambda\in \Lambda$ with non-vanishing imaginary part; its existence is guaranteed by the assumption that $0, 1 \in \Lambda$ and $\Lambda\nsubseteq \mathbb{R}$. Consider the lattice
\[\{a+b\lambda: \text{ } a,b\in\mathbb{Z}\}.\]
It obviously gives a $\delta$-net of $\CC$ for some $\delta>0$.  Consequently,
\[\{\xi (a+b\lambda): \text{ } a,b\in\mathbb{Z}\}\]
gives an $\eps/2$-net of $\CC$ for $|\xi|=\varepsilon_0$ if $\varepsilon_0>0$ is small enough. Fix $\varepsilon_0$ accordingly, and fix $R$ such that $|w|<\varepsilon_0 R$. Now it is clear that one can find $m_R$ such that for
\[C^{m_R}=\{a+b\lambda: \text{ } a,b\in\mathbb{Z}, \text{ } |a|, |b|<m_R\},\]
$\xi C^{m_R}$ gives an $\eps/2$-net of $D_R$ for $|\xi|=\varepsilon_0$.

Now let us notice that for large enough $M_R$, any element of $C^{m_R}$ can be written in the form \[\lambda\sum_{j=1}^{k}(-1)^{n_j}+\sum_{j'=1}^{l}(-1)^{n'_{j'}},\] where the exponents used are pairwise distinct, and $N<n_j, n'_{j'} <M_R$ for $j=1, \ldots, k$ and $j'=1, \ldots, l$. Denote the set of such sums by $C(-1)$, and motivated by this, let
\[C(z)=\left\{\lambda\sum_{j=1}^{k}z^{n_j}+\sum_{j'=1}^{l}z^{n'_{j'}}: \text{ } N<n_j, n'_{j'} <M_R \text{ are distinct}\right\}.\]
As $C(z)$ is determined by finitely many continuous functions of $z$, if $|(-1) - \tau|$ is small enough, $\xi C(\tau)$ gives a $\varepsilon$-net of $D_{\varepsilon_0 R}$ for any $\xi, \text{ } |\xi|=\varepsilon_0$. On the other hand, we can choose $\tau$ in any neighborhood of $-1$ so that $|\tau|^M= \varepsilon_0$ for some $M>0$. Consequently, we have that $\tau^M C(\tau)$ forms a $\varepsilon$-net of $D_{\varepsilon_0 R}$.

By definition, this implies that there exist pairwise distinct numbers $N<n_j, n'_{j'} <M_R$ for $j=1, ..., k$, $j'=1, ..., l$ such that 
$$\left|\left(\lambda\sum_{j=1}^{k}\tau^{n_j+M}+\sum_{j'=1}^{l}\tau^{n'_{j'}+M}\right) - w\right| < \varepsilon.$$
Now if we define $(\lambda_n)_{n=N+1}^{\infty}$ such that $\lambda_n=\lambda$ if and only if $n=n_j+M$ for some $1\leq j \leq k$, moreover, $\lambda_n=1$ if and only if $n=n'_{j'}+M$ for some $1\leq j' \leq l$, and otherwise $\lambda_n=0$, then we immediately obtain $|f_{\bm{\lambda}}(\tau) - w| < \varepsilon$, which concludes the proof.

\section{Proof of Theorem \ref{thm:all_arguments}}

Due to the open mapping theorem, it suffices to prove that the density of $f(U)$ holds generically.

First we will consider the case when $\Lambda$ is contained by a closed real half-plane of the form $\{z: \text{ } \alpha \leq \arg z \leq \alpha + \pi\}$. Due to Lemma \ref{lemma:translate_Lambda}~\ref{lemma:translate_mult}, we can assume that this half-plane is $\{ \Re z \geq 0\}$.

We will define $U=\bigcup_{k=1}^{\infty}U_k$ where 
$$U_k=\left\{z: 0 < \Re z <  \frac{k-1}{k}, \text{ } -\alpha_k < \arg{z} < \alpha_k \right\}$$
for $\alpha_k>0$ to be fixed later. (The upper bound on $\Re z$ is somewhat arbitrary, separation from $1$ is relevant only.) Right now we specify only that $\alpha_k$ is small enough to guarantee that $\overline{U_k}\subseteq S$.

From this point, the proof of this part is basically a simplified version of the proof of the same part of the proof of Theorem \ref{thm:not_1_arguments}. Notably, for $z\in U_k$ we can find a threshold index such that the tail sum is very small due to $|z|$ being bounded away from 1, and then by choosing $\alpha_k$ to be small enough, we can control the argument of the preceding terms. (The relative simplicity in this case is due to the fact these arguments are all near 0, instead of being near to 0 and $\pi$ alternatingly.) Based on these estimates, the real part of $f_{\bm{\lambda}}(z)$ can be bounded from below, regardless of $\bm{\lambda}=(\lambda_n)_{n=0}^{\infty}$ and $z\in  U_k$, which concludes the proof of the first part.

We now prove the second part of the statement of the theorem. Defining the set $A$ as in the proof of Theorem \ref{thm:not_-1_1_arguments} and following the argument verbatim, it suffices to find a sequence $(\lambda_n)_{n>N}\in \Lambda^{n>N}$ and some $\tau\in U$ in such a way that
\[
\left|\sum_{n>N} \lambda_n \tau^n - w\right|<\eps,\qquad |\tau-1|<\delta.
\]

Define $\Delta_0, \Delta_1, \Delta_2, \Delta_{3}\in\Lambda$ as guaranteed by Lemma \ref{lemma:get_around}. Denote their set by $V$, and define the convex polygon $P = \text{conv}(V)$. Due to the choice of $V$, $0\in \operatorname{int}(P)$, that is $D_r\subseteq V$ for small enough $r$.

Notice that if $P$ has diameter $\delta$, then $V$ is clearly a $\delta$-net of $P$. Moreover, for any $m$ we have that the Minkowski sum $\sum_{i=1}^{m} V$ is a $\delta$-net of $\sum_{i=1}^{m} P$: the proof proceeds by induction, capitalizing on the simple observation that $\sum_{i=1}^{m} P = V + \sum_{i=1}^{m-1} P$. It clearly yields that $\sum_{i=1}^{m} V$ is a $\delta$-net of $D_{mr}$ as well for any $m$. Consequently, $\xi\cdot \sum_{i=1}^{m} V$ gives an $\eps/2$-net of $D_{\eps_0 m r}$ for any $m$ and for $|\xi|=\varepsilon_0$, if $\varepsilon_0>0$ is small enough. Fix $\varepsilon_0$ accordingly, noting that it does not depend on $m$. Now fix $R^*$ as guaranteed by Lemma \ref{lemma:small_absolute_value}, and based on the choice of $R^*$ and $\varepsilon_0$, fix $m$ such that $|w| + R^*<\varepsilon_0 m r$.
Let us remark that any element of $\sum_{i=1}^{m} V$ is expressible in the seemingly complicated form
$$\sum_{j=0}^{3}\Delta_j k_j = \sum_{j=0}^{3}\Delta_j \sum_{l=1}^{k_j}1^{n^{(j)}_l},$$
where each $0\leq k_j\leq m$, and the exponents $n^{(j)}_l$ are pairwise distinct and their union equals $\{N+1, N+2, ..., N+m\}$. Let us denote the set of such combinations by $C(1)$, and motivated by this, let
\[C(z)=\left\{\sum_{j=0}^{3}\Delta_j \sum_{l=1}^{k_j}z^{n^{(j)}_l}: \text{ } 0\leq k_j \leq m, \text{ } N< n^{(j)}_l \leq N+m \text{ are all distinct and their union is }\{N+1, N+2, ..., N+m\}\right\}.\]
As $C(z)$ is determined by finitely many continuous functions of $z$, if $|1 - \tau|$ is small enough, $\xi C(\tau)$ gives a $\varepsilon$-net of $D_{\varepsilon_0 m r}$ for any $\xi, \text{ } |\xi|=\varepsilon_0$. On the other hand, we can choose $\tau$ in any neighborhood of $1$ so that $|\tau|^M= \varepsilon_0$ for some $M>0$. Consequently, we have that $\tau^M C(\tau)$ forms an $\varepsilon$-net of $D_{\varepsilon_0 m r}$.

So far the coefficients of the power series we would like to define are fixed for the indices $(i)_{i=0}^{N}$. Motivated by the previous paragraph, we would like to set aside the indices $(N+i+M)_{i=1}^{m}$. Notably, these are the indices which are intimately connected to the lastly defined $\tau^M C(\tau)$. Consequently, we apply Lemma \ref{lemma:small_absolute_value} at this point for $\tau$ and the complementary sequence $(N+1, N+2, ..., N+M, N+m+M+1, N+m+M+2, ...)$: we can find elements of $\Lambda$ corresponding to these indices, $(\lambda_n)_{n=N+1}^{N+M}$, $(\lambda_n)_{n=N+m+M+1}^{\infty}$
such that 
$$|w_1|\leq R^*,\qquad \text{where}\qquad w_1:=\sum_{n=N+1}^{N+M} \lambda_{n}z^{n} + \sum_{n=N+m+M+1}^{\infty} \lambda_{n}z^{n}.$$
Consequently, $|w-w_1|\leq |w|+R^*<\varepsilon_0 R$. This implies that there exist numbers 
\[N< n^{(j)}_l\leq N+m\] 
for $j=0, 1, 2, 3$ and $l=1, ..., k_j$, such that their union fills $\{N+1, N+2, ..., N+m\}$ without any repetitions (that is the numbers $n^{(j)}_l$ are pairwise distinct for all the possible choices of $j, l$), and
\begin{equation} \label{eq:approx+w1}
\left|\sum_{j=0}^{3}\Delta_j \sum_{l=1}^{k_j}\tau^{n^{(j)}_l + M} - (w-w_1)\right| < \frac{\varepsilon}{2}.
\end{equation}
What remains from the definition of $(\lambda_n)$ is fixing $(\lambda_n)_{n=N+1+M}^{N+m+M}$, which we carry out now based on \eqref{eq:approx+w1}. Notably let $\lambda_n=\Delta_j$ if and only if $n=n^{(j)}_l+M$ for some $1\leq l \leq k_j$. Then by the definition of $w_1$ we obtain $|\sum_{n=N+1}^{\infty}\lambda_n\tau'^n - w| < \varepsilon$. This concludes the proof.

\section{Concluding remarks}

Even though with a careful separation of cases we managed to generalize the most natural result given by Theorem \ref{thm:not_-1_1_arguments}, our results are far from being complete. In our view, the most interesting open problem related to them is whether $f(U)=\CC$ holds generically in the setup of our theorems, similarly to what is proved in \cite{Kah}. As $f$ is uniformly locally bounded in $D$, we clearly cannot rely on techniques similar to the ones seen there, hence answering this question requires additional ideas.

Another interesting aspect partially inspired by this paper is whether we can rearrange the quantifiers in our statements to some extent. More explicitly, each of our theorems addresses the question of what the generic image is of a {\it fixed} open set. It would be desirable to find extensions of this result, for example a nontrivial family of open sets such that generically, the image of each of them is dense.

\end{document}